\newtheorem{theorem}{\bf \large Theorem}[section]
\newtheorem{PROPOSITION}{\bf \large Proposition}[section]
\newtheorem{corollary}{\bf \large Corollary}[section]
\newtheorem{lemma}{\bf \large Lemma}[section]
\newtheorem{ex}{\bf \large Example}[section]
\title{\textbf{Generic conformally flat hypersurfaces  in $\mathbb{R}^4$}}
\author {\normalsize {Xiu Ji, ~~ TongZhu Li\thanks{The work  is  supported by the grant No.
11571037 and No.11471021 of NSFC.}}\\
\small{Department of Mathematics, Beijing institute of
technology,}\\
\small{Beijing,100081,China.}\\
\small{ E-mail:jixiu1106@163.com,~~litz@bit.edu.cn.}}
\date{}
\begin{document}
\maketitle
\begin{abstract}
In this paper, we study generic conformally flat hypersurfaces in the Euclidean $4$-space $\mathbb{R}^4$
using the framework of M\"{o}bius geometry.
First, we classify  locally the generic conformally flat  hypersurfaces with closed M\"{o}bius form
 under the M\"{o}bius transformation group of $\mathbb{R}^4$. Such examples come from cones, cylinders,
or rotational hypersurfaces over the  surfaces with constant Gaussian curvature in $3$-spheres, Euclidean
$3$-spaces, or hyperbolic $3$-spaces, respectively. Second, we investigate the global behavior of the generic conformally flat hypersurface
 and give some integral
formulas about these hypersurfaces.
\end{abstract}
\medskip\noindent
{\bf 2000 Mathematics Subject Classification:} 53A30; 51B10.
\par\noindent {\bf Key words:} generic  conformally flat hypersurface, M\"{o}bius metric, M\"{o}bius form,
M\"{o}bius transformation.

\vskip 1 cm
\section{Introduction}
A Riemannian manifold $(M^n, g)$ is conformally
flat, if every point has a neighborhood which is conformal to an open
set in the Euclidean space $\mathbb{R}^n$. A hypersurface of the Euclidean space $\mathbb{R}^{n+1}$
 is said to be conformally flat if so it is with respect to the induced
metric. The dimension of the hypersurface seems to play an important role
in the study of conformally flat hypersurfaces. For $n\geq4$, the immersed hypersurface  $f: M^n \to \mathbb{R}^{n+1}$ is conformally flat
if and only if at least $n-1$ of the principal curvatures coincide
at each point by the result of Cartan-Schouten
(\cite{cartan},\cite{schou}). Cartan-Schouten's result is no longer true for three dimensional hypersurfaces. Lancaster (\cite{lan}) gave some examples of conformally flat
hypersurfaces in $\mathbb{R}^4$ having three different principal curvatures. For $n=2$,
the existence of isothermal coordinates means that any Riemannian
surface is conformally flat.

 A conformally flat hypersurface $f:M^3\to \mathbb{R}^4$ in $\mathbb{R}^4$
is said to be generic, if the second fundamental form has three distinct
eigenvalues everywhere on $M^3$. Standard example of generic conformally flat hypersurface comes from cone, cylinder,
or rotational hypersurface over a  surface with constant Gaussian curvature in $3$-sphere $\mathbb{S}^3$, Euclidean
$3$-space $\mathbb{R}^3$, or hyperbolic $3$-space $\mathbb{H}^3$, respectively. The (local) classification
of these hypersurfaces is far from complete. However, several partial classification results
of generic conformally
flat hypersurfaces were given in \cite{je1}, \cite{je3}, \cite{je4},\cite{je5}, \cite{la},\cite{su1},\cite{su2}
and \cite{su3}.

It is known that the conformal transformation group of $\mathbb{R}^n$ is isomorphic to
its M\"{o}bius transformation group if $n\geq 3$.  As conformal invariant objects, generic conformally flat hypersurfaces are investigated
in this paper using the framework of M\"{o}bius geometry. If  an immersed hypersurface in $\mathbb{R}^{n+1}$
has not any umbilical point, then we can define the so-called M\"{o}bius metric on the hypersurface, which
is  invariant under M\"{o}bius transformations \cite{w}.
Together with another quadratic form (called the M\"{o}bius second fundamental form)
they form a complete system of invariants for hypersurfaces $(dim\geq3)$ in M\"obius geometry \cite{w}.
Other important M\"{o}bius invariants of the hypersurface are the M\"{o}bius form and the Blaschke tensor.
First, we find that the standard examples of generic conformally flat hypersurface has closed  M\"{o}bius form, and vice versa.
\begin{theorem}\label{the1}
Let $f:M^3\rightarrow \mathbb{R}^{4}$ be a generic conformally flat  hypersurface.
The M\"{o}bius form is closed if and only if
the hypersurface $f$  is locally M\"{o}bius equivalent to one of the following hypersurfaces in $\mathbb{R}^{4}$:\\
$(1)$ a cylinder over a surface in $\mathbb{R}^3$ with constant Gaussian curvature,\\
$(2)$ a cone over a surface in $\mathbb{S}^3$ with constant Gaussian curvature, \\
$(3)$ a rotational hypersurface over a  surface in $\mathbb{H}^3$ with constant Gaussian curvature.
\end{theorem}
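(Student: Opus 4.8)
The strategy is to work entirely with the M\"obius invariants of $f$: the M\"obius metric $\mathbf g$, the M\"obius second fundamental form $B$, the Blaschke tensor $A$, and the M\"obius form $\Phi=\sum_iC_i\omega_i$, together with the M\"obius structure equations (Gauss, Codazzi, Ricci). Two preliminary reductions organize the whole argument. First, the Ricci-type structure equation reads $C_{i,j}-C_{j,i}=\sum_k(A_{ik}B_{kj}-B_{ik}A_{kj})$, so $\Phi$ is closed if and only if $A$ and $B$ commute; since $f$ is generic, $B$ has three distinct M\"obius principal curvatures $B_1,B_2,B_3$, and $[A,B]=0$ then forces $A$ to be diagonal in the M\"obius principal frame $\{e_1,e_2,e_3\}$. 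Hence, under the hypothesis, we may work in a common orthonormal eigenframe with $A=\mathrm{diag}(A_1,A_2,A_3)$, $B=\mathrm{diag}(B_1,B_2,B_3)$, $\sum_iB_i=0$, $\sum_iB_i^2=\tfrac23$. Second, conformal flatness of $M^3$ is a conformal invariant, hence equivalent to conformal flatness of $(M^3,\mathbf g)$, i.e., since $\dim=3$, to the vanishing of the Cotton tensor of $\mathbf g$; and the M\"obius Gauss equation gives $\mathrm{Ric}(\mathbf g)=A-B^2+(\mathrm{tr}\,A)\mathbf g$, so the Schouten tensor of $\mathbf g$ equals $A-B^2+\tfrac16\mathbf g$. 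Therefore $M^3$ is conformally flat if and only if $A-B^2$ is a Codazzi tensor with respect to $\mathbf g$.

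For the sufficiency I would compute these invariants directly for the three model families. Each model is a warped product --- a cylinder $y(\Sigma)\times\mathbb R\subset\mathbb R^3\times\mathbb R$, a cone $\{t\,y(p):t>0\}\subset\mathbb R^4$ over $y:\Sigma\to\mathbb S^3$, or a rotational hypersurface over $y:\Sigma\to\mathbb H^3$ --- and in every case $f$ is conformal to the product $\Sigma\times\mathbb R$, so $M^3$ is conformally flat exactly when $\Sigma$ has constant Gaussian curvature, and generic when the two principal curvatures of $\Sigma$ are distinct and the remaining Euclidean principal curvature of $f$ (an explicit function of the profile variable) avoids them. Computing the M\"obius factor $\rho^2=\tfrac32(\|II\|^2-3H^2)$ and then $A$, $B$, $\Phi$ in the evident product frame, one finds $A$ and $B$ simultaneously diagonal there, whence $[A,B]=0$ and $d\Phi=0$ by the structure equation above. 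This proves the ``if'' direction.

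The substance of the theorem is the converse. Working in the common eigenframe I would: (i) expand the M\"obius Codazzi equations $B_{ij,k}-B_{ik,j}=\delta_{ij}C_k-\delta_{ik}C_j$ and $A_{ij,k}-A_{ik,j}=B_{ik}C_j-B_{ij}C_k$ in terms of $B_i,A_i,C_i$ and the connection coefficients $\Gamma^k_{ij}$; (ii) expand likewise the Codazzi condition that $A-B^2$ be Codazzi; and combine (i)--(ii). Eliminating the $A_i$ from these relations produces, for each pair $i\ne k$, an identity of the shape $3B_iC_k=\Gamma^i_{ik}(B_i-B_k)^2$, which, together with the analogous all-indices-distinct relations, expresses every off-diagonal $\Gamma^k_{ij}$ and every $C_i$ through the $B_i$ and their first derivatives; the remaining structure equations then close up into a first-order system for the $B_i$ alone. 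Integrating the M\"obius structure equations of this system --- that is, solving the Maurer--Cartan problem for the conformal frame of $f$ in $O(5,1)$ --- one finds that $f(M^3)$ is invariant under a one-parameter group of M\"obius transformations of $\mathbb R^4$ generated by a conformal Killing field $X$ tangent to $f$; the causal type of $X$, equivalently the sign of a discriminant built from the $B_i$, produces the trichotomy --- a parabolic field yields a cylinder, a hyperbolic field a cone, an elliptic field a rotational hypersurface --- and in each case the orbit space $\Sigma=f(M^3)/\langle\exp tX\rangle$ is a surface in the corresponding space form $\mathbb R^3$, $\mathbb S^3$, or $\mathbb H^3$. Finally, reading the already-derived formulas for $A_i,B_i$ through the Gauss equation of $\Sigma$ inside that space form forces the Gaussian curvature of $\Sigma$ to be constant, and a single M\"obius transformation normalizes $X$ and the complementary totally umbilic foliation to standard position, giving exactly the three stated families.

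I expect the main obstacle to be the integration step in the converse: showing that the closed --- but in general nonzero, hence only locally exact --- M\"obius form, once fed through the conformal-flatness relation, integrates to a genuine M\"obius symmetry of $f$ rather than merely to a first integral of the structure equations. Concretely, one must verify that the overdetermined system in $(B_i,A_i,C_i,\Gamma^k_{ij})$ is Frobenius-integrable and that its solutions carry the asserted invariance; the delicate points are excluding degenerations in which some $B_i$ vanishes (where the reconstruction changes shape), checking that the trichotomy is exhaustive, and verifying that the three families are pairwise M\"obius-inequivalent. By contrast, the sufficiency computations and the two preliminary reductions are routine once the M\"obius structure equations are at hand.
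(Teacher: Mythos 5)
Your two preliminary reductions and the sufficiency argument coincide with the paper's: by the structure equation $C_{i,j}-C_{j,i}=\sum_k(B_{ik}A_{kj}-B_{jk}A_{ki})$, closedness of the M\"obius form is exactly $[A,B]=0$, which (since $f$ is generic) diagonalizes the Blaschke tensor in the principal frame; and for the three models the paper likewise writes the M\"obius metric as a conformal multiple of a product metric and reads off from Yau's curvature formulas that $A$ and $B$ are simultaneously diagonal, so $dC=0$. Up to this point your proposal is sound and essentially identical to the paper.

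The converse, however, is where the content of the theorem lies, and your outline stops exactly at the point you yourself flag as ``the main obstacle''; the idea that resolves it is missing. After diagonalizing $A$ and expanding the Codazzi relations as in your steps (i)--(ii) (which reproduce the paper's formulas $B_{ij,i}=\tfrac{3b_i}{b_j-b_i}C_j$, $B_{ii,j}=\tfrac{b_i-b_k}{b_j-b_i}C_j$), the paper's decisive step is to apply the Ricci identity to the second covariant derivatives of $B$: since $R_{2313}=A_{13}=0$, one has $B_{23,31}=B_{23,13}$, which after substitution yields $b_kC_{i,j}=-C_iC_j$ for distinct $i,j,k$; differentiating this once more and symmetrizing forces $C_{i,jk}=0$ and $C_1C_2C_3=0$ (the degenerate case $b_1b_2b_3=0$ giving $C\equiv 0$ outright). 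This is what singles out a distinguished principal direction, say $E_1$ with $C_1=0$. Only then is the curvature sphere $F=b_1Y+\xi$ constant along $E_1$, and only then is the $2$-plane $V=\mathrm{span}\{T,Y_1\}\subset\mathbb{R}^6_1$ parallel, with $\langle T,T\rangle=Q$ of constant sign because $Q$ satisfies a linear first-order equation; the signature of $V$ (degenerate, Lorentzian, Riemannian) then gives cylinder, cone, or rotational hypersurface. Your ``conformal Killing field of parabolic/hyperbolic/elliptic type'' is the correct geometric picture of this trichotomy, but without the $C_1C_2C_3=0$ argument there is no reason the overdetermined system ``closes up'' or that any one-parameter M\"obius symmetry exists; as written, the converse direction is a plausible plan rather than a proof.
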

Second, we investigate the global behavior of compact generic conformally flat hypersurfaces
by the M\"{o}bius invariants. Let $(M^n,g)$ be a Riemannian manifold. $K(P)$ denotes the sectional curvature of sectional plane $P (\in \wedge^2TM^n)$.
We call the sectional curvature $K(P)$ have sign if $K(P)\geq 0$ for all $P\in \wedge^2TM^n$, or $K(P)\leq 0$ for all $P\in \wedge^2TM^n$.
\begin{theorem}\label{the2}
Let $f:M^3\rightarrow \mathbb{R}^{4}$ be a generic conformally flat  hypersurface.
If the hypersurface $M^3$ is compact,
then the sectional curvature of the M\"{o}bius metric can not have sign.
\end{theorem}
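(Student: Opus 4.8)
\medskip
\noindent\emph{Proof strategy for Theorem~\ref{the2}.}
I would argue by contradiction. Suppose $M^{3}$ is compact and the sectional curvature $K$ of the M\"obius metric $\mathbf g$ has a fixed sign, i.e.\ $\varepsilon K\geq 0$ on all of $M^{3}$ for some $\varepsilon\in\{+1,-1\}$; since $\dim M^{3}=3$, this forces $\varepsilon\,\mathrm{Ric}_{\mathbf g}\geq 0$ and $\varepsilon R_{\mathbf g}\geq 0$, where $\mathrm{Ric}_{\mathbf g}$ and $R_{\mathbf g}$ are the Ricci and scalar curvature of $\mathbf g$. Recall the M\"obius invariants of $f$: the M\"obius metric $\mathbf g$, the M\"obius second fundamental form $B=\sum B_{ij}\,\omega_i\omega_j$ (with $\mathrm{tr}\,B=0$ and $|B|^{2}$ a positive constant), the Blaschke tensor $A=\sum A_{ij}\,\omega_i\omega_j$, and the M\"obius form $\Phi=\sum C_i\,\omega_i$. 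They satisfy the Codazzi equation for $B$ --- which yields $\mathrm{div}_{\mathbf g}B=c\,\Phi$ for a nonzero constant $c$ --- the Codazzi equation for $A$, the structure equation giving $d\Phi$ in terms of the commutator $[A,B]$, and the Gauss equation expressing the Riemann tensor of $\mathbf g$ through $A$ and $B$. Moreover, since the induced metric on $M^{3}$ is conformally flat by hypothesis and conformal flatness is a conformal invariant, $(M^{3},\mathbf g)$ is conformally flat; in dimension three this means that the Cotton tensor of $\mathbf g$ vanishes and that the full curvature tensor of $\mathbf g$ is determined by $\mathrm{Ric}_{\mathbf g}$. Finally, ``generic'' is equivalent to $B$ having three distinct eigenvalues $b_{1}>b_{2}>b_{3}$ at every point.

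The first step is a Simons-type integral formula for $B$. Starting from $\mathrm{div}_{\mathbf g}B=c\,\Phi$, I take one more covariant derivative, commute the two derivatives by the Ricci identity, and replace the resulting curvature terms using the Gauss equation together with the three-dimensional conformal-flatness identity; contracting against $B$ and using $\mathrm{tr}\,B=0$ together with $\Delta_{\mathbf g}|B|^{2}=0$ (so that $\sum_{ij}B_{ij}\,\Delta_{\mathbf g}B_{ij}=-|\nabla B|^{2}$), the identity becomes, modulo a divergence, a pointwise formula that integrates over the compact $M^{3}$ to
\[\int_{M^{3}}\Big(|\nabla B|^{2}-c_{1}\,|\Phi|^{2}+c_{2}\,\langle B^{2},\mathrm{Ric}_{\mathbf g}\rangle-c_{3}\,R_{\mathbf g}\Big)\,dV_{\mathbf g}=0\]
with explicit positive constants $c_{1},c_{2},c_{3}$. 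I would pair this with the Bochner--Weitzenb\"ock identity for the $1$-form $\Phi$,
\[\int_{M^{3}}\Big(|d\Phi|^{2}+|\delta\Phi|^{2}-|\nabla\Phi|^{2}\Big)\,dV_{\mathbf g}=\int_{M^{3}}\mathrm{Ric}_{\mathbf g}(\Phi,\Phi)\,dV_{\mathbf g},\]
noting that in a local frame diagonalizing $B$ one has $|d\Phi|^{2}=|[A,B]|^{2}=\sum_{i\neq j}(b_{i}-b_{j})^{2}A_{ij}^{2}$, and that $\delta\Phi$ is $-\mathrm{div}_{\mathbf g}\Phi$, which the Codazzi equation for $A$ expresses through $A$ and $B$.

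Combining the two formulas (and the relation $\mathrm{div}_{\mathbf g}B=c\,\Phi$, which ties $|\Phi|^{2}$ to first derivatives of $B$) is designed to eliminate every quantity of indefinite sign and to present the integrand as $\varepsilon$ times a sum of manifestly nonnegative terms: a positive multiple of $|\nabla B|^{2}$, the weighted off-diagonal Blaschke terms $\sum_{i<j}(b_{i}-b_{j})^{2}A_{ij}^{2}$ --- here genericity enters, the weights $(b_{i}-b_{j})^{2}$ being strictly positive --- and curvature quantities that carry the sign $\varepsilon$ because $K$ does. Since the integral vanishes, every such term must vanish identically; in particular $d\Phi\equiv 0$, so the M\"obius form is closed (and, depending on the precise combination used, also $\nabla B\equiv 0$).

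By Theorem~\ref{the1}, $f$ is then locally M\"obius equivalent to a cylinder over a surface of constant Gaussian curvature in $\mathbb R^{3}$, a cone over such a surface in $\mathbb S^{3}$, or a rotational hypersurface over such a surface in $\mathbb H^{3}$. In each case the model carries a distinguished ``generating'' direction along which the M\"obius metric has infinite length, so the model is a complete noncompact manifold with no compact quotient realizable by a generic conformally flat hypersurface; equivalently, a non-umbilic surface of constant Gaussian curvature whose associated cone, cylinder, or rotational hypersurface is compact does not exist. Hence a compact $M^{3}$ cannot be locally M\"obius equivalent to any of these, a contradiction, and the theorem follows. The main obstacle is the middle step: the Simons identity and the Weitzenb\"ock identity each contain terms of indefinite sign, and the real work is to arrange --- using $\mathrm{tr}\,B=0$, $|B|^{2}=\mathrm{const}$, the Gauss equation, and the three-dimensional conformal-flatness identity --- that a suitable linear combination of them has integrand $\varepsilon\cdot(\text{nonnegative})$, strong enough that its vanishing forces the M\"obius form to be closed; the sign cases $\varepsilon=+1$ and $\varepsilon=-1$ may require slightly different bookkeeping.
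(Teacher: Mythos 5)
Your proposal does not contain the actual proof: the decisive step --- producing an integrand with a definite sign --- is exactly the part you defer (``the real work is to arrange \dots''), and there is no evidence that the Simons-plus-Bochner combination you sketch can be made to close. The paper's argument is of a different and much more specific nature. It introduces two globally well-defined $2$-forms built from the connection forms of an admissible principal frame,
\[
\Phi=\omega_{12}\wedge\omega_3+\omega_{23}\wedge\omega_1+\omega_{31}\wedge\omega_2,\qquad
\Psi=\sum_{(ijk)}(b_i-b_j)^2\,\omega_{ij}\wedge\omega_k ,
\]
(note $\Phi$ here is \emph{not} the M\"obius form, which the paper calls $C$), computes $d\Phi$ and $d\Psi$ using the conformal flatness relations (\ref{flat2}) and (\ref{ww}), and observes that in the combination $2d\Phi-d\Psi$ all the indefinite terms quadratic in the $C_i$ cancel, leaving
\[
2d\Phi-d\Psi=\sum_{(ijk)}\bigl[(b_i-b_j)^2-2\bigr]R_{ijij}\,dv_g .
\]
Stokes' theorem then kills the integral, and the normalization $\sum_i b_i=0$, $\sum_i b_i^2=\tfrac23$ gives $\sum_{i<j}(b_i-b_j)^2=2$, so each coefficient $(b_i-b_j)^2-2$ is strictly negative. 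A sign hypothesis on $K$ then forces $R_{1212}=R_{1313}=R_{2323}=0$, hence (scalar curvature zero plus signed sectional curvature) the M\"obius metric is flat, and the classification of hypersurfaces with constant M\"obius sectional curvature in \cite{lmw} (all non-compact) gives the contradiction. None of this cancellation mechanism, nor the key algebraic fact $(b_i-b_j)^2<2$, appears in your outline, so the central claim of your middle step is unsupported.

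Your endgame also has an independent gap. Even if you could conclude $dC=0$ and invoke Theorem~\ref{the1}, ruling out compactness is not a matter of ``a generating direction of infinite length'': in the rotational case the generating factor is $\mathbb{S}^1$, which is compact, and the M\"obius metric is $\rho^2(I_{\mathbb{S}^1}+I_u)$, so compactness of $M^3$ reduces to compactness of the base surface $u(M^2)\subset\mathbb{H}^3$. Excluding that requires a Liebmann-type rigidity statement (a compact constant-Gaussian-curvature surface in $\mathbb{H}^3$ is a totally umbilic sphere, contradicting genericity), which you neither state nor prove. The paper avoids this entirely by reducing to the flat-M\"obius-metric case and citing \cite{lmw}.
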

\begin{theorem}\label{the3}
Let $f:M^3\rightarrow \mathbb{R}^{4}$ be a generic conformally flat  hypersurface.
If the hypersurface $M^3$ is compact, then
$$\int_{M^3}\{|\tilde{A}|^2+\frac{1}{3}R^2-|Ric|^2-\frac{2}{27}\}dv_g=0,$$
where  $\tilde{A}:=A-\frac{1}{3}tr(A)g$ denotes the trace-free Blaschke tensor, $|Ric|$ denotes the norm of the Ricci curvature of $g$,
and $R$ denotes the scalar curvature of $g$.
\end{theorem}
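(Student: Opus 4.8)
The plan is to collapse the integrand to a single quadratic pairing of two trace‑free $(0,2)$‑tensors, and then to show that the integral of this pairing vanishes by an integration‑by‑parts argument in which the conformal flatness of $M^{3}$ enters essentially. Throughout I would use the M\"{o}bius structure equations for $f$: the normalization $\mathrm{tr}(B)=0$, $|B|^{2}=\frac{2}{3}$; the Gauss equation expressing the Riemann tensor of the M\"{o}bius metric $g$ through the Blaschke tensor $A$ and the M\"{o}bius second fundamental form $B$; and the Codazzi equations for $A$ and $B$ involving the M\"{o}bius form $C$. Taking traces in the Gauss equation gives $\mathrm{Ric}=A+\mathrm{tr}(A)\,g-B^{2}$ and $R=4\,\mathrm{tr}(A)-\frac{2}{3}$, hence, on trace‑free parts, $\widetilde{\mathrm{Ric}}=\widetilde{A}-\widetilde{B^{2}}$, where $\widetilde{B^{2}}:=B^{2}-\frac{1}{3}|B|^{2}g$. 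Since at each point $B$ is a trace‑free symmetric $3\times3$ matrix with $|B|^{2}=\frac{2}{3}$, an elementary computation with its eigenvalues gives $\mathrm{tr}(B^{4})=\frac{2}{9}$, and hence $|\widetilde{B^{2}}|^{2}=\frac{2}{27}$ --- precisely the constant in the statement. Combined with the three‑dimensional identity $|\mathrm{Ric}|^{2}-\frac{1}{3}R^{2}=|\widetilde{\mathrm{Ric}}|^{2}$ (the Weyl tensor vanishes when $\dim=3$), the integrand becomes
\[
|\widetilde{A}|^{2}+\tfrac{1}{3}R^{2}-|\mathrm{Ric}|^{2}-\tfrac{2}{27}
=|\widetilde{A}|^{2}-|\widetilde{\mathrm{Ric}}|^{2}-|\widetilde{B^{2}}|^{2}
=2\langle\widetilde{\mathrm{Ric}},\widetilde{B^{2}}\rangle ,
\]
so Theorem~\ref{the3} is equivalent to $\int_{M^{3}}\langle\widetilde{\mathrm{Ric}},\widetilde{B^{2}}\rangle\,dv_{g}=0$.

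The next step is to bring in conformal flatness. Since $M^{3}$ is conformally flat, the M\"{o}bius metric $g$ (being conformal to the induced metric) is conformally flat, so in dimension three its Cotton tensor vanishes; equivalently, the Schouten tensor $S=\mathrm{Ric}-\frac{1}{4}Rg$ is a Codazzi tensor, $S_{ij,k}=S_{ik,j}$. Substituting the Gauss expression for $\mathrm{Ric}$ and using $R=4\,\mathrm{tr}(A)-\frac{2}{3}$ one finds $S=A-B^{2}+\frac{1}{6}g$, so conformal flatness says exactly that $A-B^{2}$ is Codazzi; together with the M\"{o}bius Codazzi equation for $A$ this gives the key relation
\[
(B^{2})_{ij,k}-(B^{2})_{ik,j}=B_{ik}C_{j}-B_{ij}C_{k},
\]
a Codazzi‑type equation for $B^{2}$. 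Contracting this relation and the Codazzi equation for $B$ against $B$, and using that $|B|^{2}$ is constant together with the Cayley--Hamilton identity for $B$ in dimension three, I also expect to extract the scalar consequences $\mathrm{div}\,B=-2C$, $\mathrm{div}(B^{2})=-B\cdot C$, and a formula for $\nabla(\det B)$ linear in $C$; these will drive the integrations by parts.

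To finish, I would assemble integral identities. As $|B|^{2}$ is constant, $0=\int_{M^{3}}\Delta(\tfrac{1}{2}|B|^{2})\,dv_{g}$ gives $\int_{M^{3}}|\nabla B|^{2}\,dv_{g}=-\int_{M^{3}}\langle B,\Delta B\rangle\,dv_{g}$; expanding $\Delta B$ with the Codazzi equation for $B$ (used twice), the Ricci identity, and the Gauss equation, and integrating by parts using $\mathrm{div}\,B=-2C$ and $\mathrm{div}(B^{2})=-B\cdot C$, produces a first identity relating $\int\langle A,B^{2}\rangle$, $\int\mathrm{tr}(A)$, $\int|C|^{2}$, $\int|\nabla B|^{2}$ and $\mathrm{Vol}(M^{3},g)$; since $\widetilde{\mathrm{Ric}}=\widetilde{A}-\widetilde{B^{2}}$, this rewrites $\int\langle\widetilde{\mathrm{Ric}},\widetilde{B^{2}}\rangle$ as an explicit linear combination of $\int|\nabla B|^{2}$, $\int|C|^{2}$, $\int\mathrm{tr}(A)$ and $\mathrm{Vol}$. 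A second, independent identity is forced by the conformal‑flatness relation above --- for instance by comparing $\int\langle A,\Delta(B^{2})\rangle$ with $\int\langle\Delta A,B^{2}\rangle$, whose difference is a total divergence and hence integrates to zero on the compact $M^{3}$ --- and this is precisely what cancels the remaining gradient, $|C|^{2}$, $\mathrm{tr}(A)$ and constant terms, yielding $\int_{M^{3}}\langle\widetilde{\mathrm{Ric}},\widetilde{B^{2}}\rangle\,dv_{g}=0$.

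The main obstacle is this last computation: carrying out the Simons‑type expansion of $\langle B,\Delta B\rangle$ and the parallel manipulation of the $B^{2}$‑Codazzi relation so that, after integrating, the cubic‑in‑$B$ terms, the $A$--$B$ cross terms and the $\nabla C$ terms all cancel --- and doing so in a way that genuinely uses conformal flatness, since the Codazzi property of $S$ (equivalently the $B^{2}$‑Codazzi relation) is indispensable: without it the identity is false. A worthwhile preliminary check is whether the $B^{2}$‑Codazzi relation already exhibits $\langle\widetilde{\mathrm{Ric}},\widetilde{B^{2}}\rangle$ as a pointwise divergence modulo $\mathrm{div}(B\cdot C)$ and a scalar Laplacian, which would make the Simons computation unnecessary.
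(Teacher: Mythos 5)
Your pointwise reduction of the integrand is correct and is a genuinely cleaner way to organize the algebra: with $\mathrm{Ric}=A-B^{2}+\mathrm{tr}(A)g$, $|B|^{2}=\tfrac23$ and $\mathrm{tr}(B^{4})=\tfrac29$ one does get $|\widetilde{B^{2}}|^{2}=\tfrac{2}{27}$ and
\[
|\widetilde{A}|^{2}+\tfrac13R^{2}-|\mathrm{Ric}|^{2}-\tfrac{2}{27}
=2\langle\widetilde{\mathrm{Ric}},\widetilde{B^{2}}\rangle ,
\]
and your auxiliary facts ($S=A-B^{2}+\tfrac16g$, $\mathrm{div}\,B=-2C$, $\mathrm{div}(B^{2})=-B\cdot C$, the Codazzi relation for $B^{2}$) all check out against the structure equations. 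But the proof stops exactly where the theorem begins. Everything after ``To finish, I would assemble integral identities'' is a plan, not an argument: you assert that a Simons-type expansion of $\langle B,\Delta B\rangle$ plus a second identity ``is precisely what cancels the remaining gradient, $|C|^{2}$, $\mathrm{tr}(A)$ and constant terms,'' but no cancellation is exhibited, and you flag this yourself as the main obstacle. Since the whole content of the theorem is the vanishing of $\int\langle\widetilde{\mathrm{Ric}},\widetilde{B^{2}}\rangle\,dv_{g}$, declaring that the terms cancel is a restatement of the conclusion, not a proof of it. The missing idea, which the paper supplies, is the explicit exact form: it introduces the globally defined $2$-forms $\Phi=\sum\omega_{ij}\wedge\omega_{k}$ and $\Psi=\sum(b_{i}-b_{j})^{2}\omega_{ij}\wedge\omega_{k}$ and computes, using the conformally-flat expressions for $\omega_{ij}$ and $B_{ij,k}$, that $2d\Phi-d\Psi=\sum_{i<j}\bigl[(b_{i}-b_{j})^{2}-2\bigr]R_{ijij}\,dv_{g}$; Stokes then does the rest, and the remaining work is pointwise algebra of the kind you already did. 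Until you either find the analogous exact form in your formulation or actually carry the Simons computation to the end, the argument is incomplete.

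A further caution before you invest in that computation: track the constants. A direct check (e.g.\ at a point with $A=0$, $b=(-1/\sqrt3,0,1/\sqrt3)$) gives $\sum_{i<j}[(b_{i}-b_{j})^{2}-2]R_{ijij}=3\langle\widetilde{\mathrm{Ric}},\widetilde{B^{2}}\rangle-\tfrac23R$, whereas $\tfrac32$ times the theorem's integrand equals $3\langle\widetilde{\mathrm{Ric}},\widetilde{B^{2}}\rangle$; the paper's passage from its equation (4.7) to (4.8) appears to drop a $\tfrac23R$ term. So the quantity whose integral the Stokes argument actually kills is $\int\{|\widetilde{A}|^{2}+\tfrac13R^{2}-|\mathrm{Ric}|^{2}-\tfrac{2}{27}-\tfrac49R\}\,dv_{g}$, which coincides with the stated formula only if $\int_{M^{3}}R\,dv_{g}=0$. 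If your own integration by parts leaves an uncancelled $\int R$ term, that is not necessarily a sign you went wrong --- it may be a sign the stated identity needs the extra $R$-term.
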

\begin{corollary}\label{cor}
Let $f:M^3\rightarrow \mathbb{R}^{4}$ be a generic conformally flat  hypersurface.
If the hypersurface $M^3$ is compact, then
$$\int_{M^3}\{|\tilde{A}|^2-\frac{2}{27}\}dv_g>0,$$
where  $\tilde{A}:=A-\frac{1}{3}tr(A)g$ denotes the trace-free Blaschke tensor.
\end{corollary}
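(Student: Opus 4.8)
The plan is to read the Corollary off Theorem~\ref{the3} once one understands the sign of the auxiliary curvature term appearing there. Rearranging the integral identity of Theorem~\ref{the3},
\[
\int_{M^3}\Bigl(|\tilde A|^2-\tfrac{2}{27}\Bigr)\,dv_g=\int_{M^3}\Bigl(|Ric|^2-\tfrac13R^2\Bigr)\,dv_g,
\]
so it suffices to show that the function $|Ric|^2-\tfrac13R^2$ (all quantities taken with respect to the M\"obius metric $g$) is nonnegative on $M^3$ and does not vanish identically; the strict positivity of the integral over the compact manifold $M^3$ then follows at once.

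For the pointwise bound I would diagonalize the Ricci tensor at an arbitrary point $p\in M^3$, obtaining eigenvalues $\lambda_1,\lambda_2,\lambda_3$, so that $R(p)=\sum_i\lambda_i$ and $|Ric|^2(p)=\sum_i\lambda_i^2$. The Cauchy--Schwarz inequality $\bigl(\sum_i\lambda_i\bigr)^2\le 3\sum_i\lambda_i^2$ immediately gives $|Ric|^2(p)\ge\tfrac13R^2(p)$, with equality exactly when $\lambda_1=\lambda_2=\lambda_3$, i.e. when $Ric=\tfrac{R}{3}g$ at $p$.

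It then remains to rule out equality at every point. If $|Ric|^2\equiv\tfrac13R^2$ on $M^3$, then $Ric=\tfrac{R}{3}g$ everywhere; since $\dim M^3=3\ge3$, Schur's lemma forces $R$ to be constant, hence $(M^3,g)$ is an Einstein $3$-manifold and therefore a space form of constant sectional curvature $R/6$. In particular the sectional curvature of the M\"obius metric has a sign, contradicting Theorem~\ref{the2}, which applies precisely because $M^3$ is compact. Hence the continuous, nonnegative function $|Ric|^2-\tfrac13R^2$ is strictly positive at some point, so $\int_{M^3}(|Ric|^2-\tfrac13R^2)\,dv_g>0$, and the displayed reformulation of Theorem~\ref{the3} yields $\int_{M^3}(|\tilde A|^2-\tfrac{2}{27})\,dv_g>0$, as claimed.

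The argument is essentially immediate given Theorems~\ref{the2} and~\ref{the3}; the only point requiring a little care is the last step, where one must confirm that the Ricci tensor, scalar curvature and sectional curvature entering those two results all refer to the same M\"obius metric $g$, and that a metric of constant sectional curvature does count as ``having sign'' in the sense fixed just before Theorem~\ref{the2}. I do not expect any genuine obstacle here; the real content of the Corollary lies entirely in Theorems~\ref{the2} and~\ref{the3}.
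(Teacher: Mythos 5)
Your proof is correct and is essentially the paper's argument: combine the integral identity of Theorem~\ref{the3} with the pointwise Cauchy--Schwarz bound $|Ric|^2\ge\tfrac13R^2$ and then rule out identical equality. The only (harmless) divergence is in excluding the equality case: the paper observes that equality forces constant M\"obius sectional curvature and appeals directly to the classification in \cite{lmw} of such hypersurfaces as non-compact, whereas you reach the same contradiction via Schur's lemma and Theorem~\ref{the2}; since a constant-curvature metric certainly ``has sign'' in the sense fixed before Theorem~\ref{the2}, your route is valid and in fact slightly more self-contained.
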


The paper is organized as follows. In section 2, we review the
elementary facts about M\"{o}bius geometry of hypersurfaces in
$\mathbb{R}^{n+1}$. In section 3, we investigate local behavior of generic conformally flat hypersurfaces in $\mathbb{R}^4$ and
prove Theorem \ref{the1}. In section 4, we investigate global behavior  of generic conformally flat hypersurfaces in $\mathbb{R}^4$ and
prove Theorem \ref{the2} and Theorem \ref{the3}.

\vskip 1 cm
\section{M\"{o}bius invariants of hypersurfaces in $\mathbb{R}^{n+1}$}
In \cite{w}, Wang has defined M\"{o}bius invariants of submanifolds in $\mathbb{S}^{n+1}$ and given
a congruent theorem
of hypersurfaces in $\mathbb{S}^{n+1}$. In this section,
we define M\"{o}bius invariants and give a congruent theorem
of hypersurfaces in $\mathbb{R}^{n+1}$ in the same way in \cite{w}. For
details we refer to $\cite{liu},\cite{w}$.

Let $\mathbb{R}^{n+3}_1$ be the Lorentz space, i.e., $\mathbb{R}^{n+3}$ with inner
 product $<\cdot,\cdot>$ defined by
\[<x,y>=-x_0y_0+x_1y_1+\cdots+x_{n+2}y_{n+2},\] for
$x=(x_0,x_1,\cdots,x_{n+2}), y=(y_0,y_1,\cdots,y_{n+2})\in \mathbb{R}^{n+3}$.

Let $f:M^{n}\rightarrow \mathbb{R}^{n+1}$ be a hypersurface without umbilical points and
assume that $\{e_i\}$ is an orthonormal basis with respect to the
induced metric $I=df\cdot df$ with $\{\theta_i\}$ the dual basis.
Let $II=\sum_{ij}h_{ij}\theta_i\theta_j$ and
$H=\sum_i\frac{h_{ii}}{n}$ be the second fundamental form and the
mean curvature of $f$, respectively. We define the M\"{o}bius
position vector $Y: M^n\rightarrow \mathbb{R}^{n+3}_1$ of $f$ by
$$Y=\rho\left(\frac{1+|f|^2}{2},\frac{1-|f|^2}{2},f\right)~,
~~\rho^2=\frac{n}{n-1}(|II|^2-nH^2).$$
\begin{theorem}\cite{w}
Two hypersurfaces $f,\bar{f}: M^n\rightarrow \mathbb{R}^{n+1}$ are M\"{o}bius
equivalent if and only if there exists $T$ in the Lorentz group
$O(n+2,1)$ such that $\bar{Y}=YT.$
\end{theorem}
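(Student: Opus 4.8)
The plan is to exhibit the M\"{o}bius position vector $Y$ as a canonical null lift and to identify the M\"{o}bius group of $\mathbb{R}^{n+1}$ with the Lorentz group acting projectively on the light cone. First I would recall the light-cone model: write $\mathcal{C}=\{x\in\mathbb{R}^{n+3}_1 : \langle x,x\rangle=0,\ x\neq 0\}$ for the light cone and $\mathbb{Q}^{n+1}=\mathbb{P}(\mathcal{C})$ for its projectivization. The map $u\mapsto\big[(\tfrac{1+|u|^2}{2},\tfrac{1-|u|^2}{2},u)\big]$ embeds $\mathbb{R}^{n+1}$ conformally onto $\mathbb{Q}^{n+1}$ minus one point, realizing $\mathbb{Q}^{n+1}$ as the conformal compactification $\mathbb{S}^{n+1}$. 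Since $O(n+2,1)$ preserves $\langle\cdot,\cdot\rangle$, it acts on $\mathcal{C}$ and descends to $\mathbb{Q}^{n+1}$ by conformal diffeomorphisms, giving a homomorphism $\Phi:O(n+2,1)\to\mathrm{Conf}(\mathbb{Q}^{n+1})$ with kernel $\{\pm I\}$ whose image is exactly the M\"{o}bius group of $\mathbb{R}^{n+1}$; under this identification the embedding above is equivariant.

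Next I would record two facts about $Y$. A direct computation gives $\langle Y,Y\rangle=\rho^2\big[(\tfrac{1-|f|^2}{2})^2-(\tfrac{1+|f|^2}{2})^2+|f|^2\big]=0$, so $Y$ maps $M^n$ into $\mathcal{C}$ and $[Y]$ is the composition of $f$ with the conformal embedding. The M\"{o}bius metric is $g:=\langle dY,dY\rangle=\rho^2 I$, where $I=df\cdot df$. The essential point is that $g$ is a conformal, hence M\"{o}bius, invariant: among all positive rescalings $\lambda Y$ of the null lift $[Y]$, the normalization $\rho^2=\tfrac{n}{n-1}(|II|^2-nH^2)$ is the unique choice for which the induced metric is M\"{o}bius invariant. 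Thus $Y$ is the canonical lift of $f$ determined by the conformal structure alone.

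For the \emph{if} direction, assume $\bar Y=YT$ with $T\in O(n+2,1)$. Because $T$ is a linear isometry of $\langle\cdot,\cdot\rangle$, the two lifts are both null and induce the same metric, $\langle d\bar Y,d\bar Y\rangle=\langle dY,dY\rangle$. Projectivizing, $[\bar Y]=\Phi(T)\circ[Y]$, so under the conformal embedding $\bar f$ and $f$ differ by the M\"{o}bius transformation $\sigma:=\Phi(T)$ of $\mathbb{R}^{n+1}$; that is, $f$ and $\bar f$ are M\"{o}bius equivalent.

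For the \emph{only if} direction, assume $\bar f=\sigma\circ f$ and set $T:=\Phi^{-1}(\sigma)\in O(n+2,1)$ (either sign). Equivariance of the embedding gives $[\bar Y]=[YT]$, hence $\bar Y=e^{\tau}\,YT$ for some smooth $\tau$ on $M^n$. It remains to show $\tau\equiv 0$. Since $T$ is an isometry, $YT$ is a null lift inducing $\langle d(YT),d(YT)\rangle=g$, while $\bar Y$ is by construction the canonical lift of $\bar f$ inducing its own M\"{o}bius metric $\bar g$. The heart of the argument is the conformal covariance of the normalization: one computes the transformation laws of $II$ and $H$ under the conformal change of ambient metric induced by $\sigma$ and verifies that $(|II|^2-nH^2)\,I$ is invariant, so $\bar g=g$. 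Two positive rescalings of a fixed null lift that induce the same non-degenerate metric must coincide, forcing $e^{\tau}\equiv 1$ and $\bar Y=YT$. The main obstacle is precisely this invariance computation: showing that the scalar density $|II|^2-nH^2$ and the induced metric $I$ transform inversely under M\"{o}bius transformations so that $\rho^2 I$ is unchanged; equivalently, that the trace-free length $|II-HI|^2=|II|^2-nH^2$ carries exactly the conformal weight making $\rho^2 I$ canonical. Everything else reduces to the linear-algebraic fact that $O(n+2,1)$ preserves the light cone and its inner product.
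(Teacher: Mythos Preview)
The paper does not supply its own proof of this theorem; it is quoted verbatim from Wang~\cite{w} and used as a black box, with the immediate corollary being that $g=\langle dY,dY\rangle$ is a M\"{o}bius invariant. So there is no in-paper argument to compare against.

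Your proposal is the standard light-cone proof and is essentially correct. The ``if'' direction is immediate. For the ``only if'' direction, your identification of the crux---that $\rho^2 I$ is M\"{o}bius invariant---is right, and your reason is sound: the trace-free second fundamental form $\mathring{II}=II-HI$ satisfies $|\mathring{II}|^2=|II|^2-nH^2$ and, under a conformal change $\tilde g_{\mathrm{amb}}=e^{2\phi}g_{\mathrm{amb}}$, one has $\tilde{\mathring{II}}=e^{\phi}\mathring{II}$ and $\tilde I=e^{2\phi}I$, whence $|\tilde{\mathring{II}}|^2_{\tilde I}\,\tilde I=|\mathring{II}|^2_I\,I$. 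The step ``two positive rescalings of a null lift inducing the same metric coincide'' is justified by the computation $\langle d(e^{\tau}Z),d(e^{\tau}Z)\rangle=e^{2\tau}\langle dZ,dZ\rangle$ for any null $Z$, using $\langle Z,Z\rangle=\langle Z,dZ\rangle=0$; this forces $e^{\tau}\equiv 1$. One small caveat worth making explicit: since $\ker\Phi=\{\pm I\}$ and $Y$ is normalized to lie in the future cone (first component $\rho\,\tfrac{1+|f|^2}{2}>0$), you should choose the preimage $T$ so that $YT$ also lies in the future cone before writing $\bar Y=e^{\tau}YT$ with $e^{\tau}>0$; otherwise a sign can appear. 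With that adjustment the argument is complete and matches the approach in the cited reference.
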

It follows immediately from Theorem 2.1 that
$$g=<dY,dY>=\rho^2df\cdot df$$
is a M\"{o}bius invariant, called the M\"{o}bius metric of $f$.

Let $\Delta$ be the Laplacian with respect to $g$. Define
$$N=-\frac{1}{n}\Delta Y-\frac{1}{2n^2}<\Delta Y,\Delta Y>Y,$$
which satisfies
$<Y,Y>=0=<N,N>, ~~<N,Y>=1.$

Let $\{E_1,\cdots,E_n\}$ be a local orthonormal basis for $(M^n,g)$
with dual basis $\{\omega_1,\cdots,\omega_n\}$. Write
$Y_i=E_i(Y)$. Then we have
$$<Y_i,Y>=<Y_i,N>=0, ~<Y_i,Y_j>=\delta_{ij}, ~~1\leq i,j\leq n.$$
Let $\xi$ be the mean curvature sphere of $f$ written as
$$\xi=\left(\frac{1+|f|^2}{2}H+f\cdot e_{n+1},\frac{1-|f|^2}{2}H-f\cdot e_{n+1},Hf+e_{n+1}\right)~,$$
where $e_{n+1}$ is the unit normal vector field of $f$ in $\mathbb{R}^{n+1}$.
Thus $\{Y,N,Y_1,\cdots,Y_n,\xi\}$ forms a moving frame in
$\mathbb{R}^{n+3}_1$ along $M^n$. We will use the following range of indices
in this section: $1\leq i,j,k\leq n$. We can write the structure
equations as following:
\begin{eqnarray*}
&&dY=\sum_iY_i\omega_i,\\
&&dN=\sum_{ij}A_{ij}\omega_iY_j+\sum_iC_i\omega_i\xi,\\
&&dY_i=-\sum_jA_{ij}\omega_jY-\omega_iN+\sum_j\omega_{ij}Y_j+\sum_jB_{ij}\omega_j\xi,\\
&&d\xi=-\sum_iC_i\omega_iY-\sum_{ij}\omega_iB_{ij}Y_j,
\end{eqnarray*}
where $\omega_{ij}$ is the connection form of the M\"{o}bius metric
$g$ and $\omega_{ij}+\omega_{ji}=0$. The tensors
$$
{\bf A}=\sum_{ij}A_{ij}\omega_i\otimes\omega_j,~~
{\bf B}=\sum_{ij}B_{ij}\omega_i\otimes\omega_j,~~
{\bf C}=\sum_iC_i\omega_i$$ are called the
Blaschke tensor, the M\"{o}bius second
fundamental form and the M\"{o}bius form of $f$, respectively.
The covariant derivative of
$C_i, A_{ij}, B_{ij}$ are defined by
\begin{eqnarray*}
&&\sum_jC_{i,j}\omega_j=dC_i+\sum_jC_j\omega_{ji},\\
&&\sum_kA_{ij,k}\omega_k=dA_{ij}+\sum_kA_{ik}\omega_{kj}+\sum_kA_{kj}\omega_{ki},\\
&&\sum_kB_{ij,k}\omega_k=dB_{ij}+\sum_kB_{ik}\omega_{kj}+\sum_kB_{kj}\omega_{ki}.
\end{eqnarray*}
The integrability conditions for the structure equations are given
by
\begin{eqnarray}
&&A_{ij,k}-A_{ik,j}=B_{ik}C_j-B_{ij}C_k,\label{equa1}\\
&&C_{i,j}-C_{j,i}=\sum_k(B_{ik}A_{kj}-B_{jk}A_{ki}),\label{equa2}\\
&&B_{ij,k}-B_{ik,j}=\delta_{ij}C_k-\delta_{ik}C_j,\label{equa3}\\
&&R_{ijkl}=B_{ik}B_{jl}-B_{il}B_{jk}
+\delta_{ik}A_{jl}+\delta_{jl}A_{ik}
-\delta_{il}A_{jk}-\delta_{jk}A_{il},\label{equa4}\\
&&R_{ij}:=\sum_kR_{ikjk}=-\sum_kB_{ik}B_{kj}+(tr{\bf
A})\delta_{ij}+(n-2)A_{ij},\label{equa5}\\
&&\sum_iB_{ii}=0, ~~\sum_{ij}(B_{ij})^2=\frac{n-1}{n}, ~~tr{\bf
A}=\sum_iA_{ii}=\frac{1}{2n}(1+\frac{n}{n-1}R),\label{equa6}
\end{eqnarray}
Here $R_{ijkl}$ denotes the curvature tensor of $g$,
and $R=\sum_{ij}R_{ijij}$ is the
M\"{o}bius scalar curvature. We know that all coefficients in the
structure equations are determined by $\{g, {\bf B}\}$ when $n\geq 3$. Thus we have
\begin{theorem}$\cite{w}$\label{fundthe}
Two hypersurfaces $f: M^n\rightarrow \mathbb{R}^{n+1}$ and $\bar{f}:
M^n\rightarrow \mathbb{R}^{n+1} (n\geq 3)$ are M\"{o}bius equivalent if and
only if there exists a diffeomorphism $\varphi: M^n\rightarrow M^n$
which preserves the M\"{o}bius metric and the M\"{o}bius second
fundamental form.
\end{theorem}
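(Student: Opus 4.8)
The plan is to run the classical moving-frame and ODE-uniqueness argument of Wang, now with the null frame $\{Y,N,Y_1,\dots,Y_n,\xi\}\subset\mathbb{R}^{n+3}_1$ attached to the hypersurface. The ``only if'' direction is immediate: if $f$ and $\bar f$ are M\"{o}bius equivalent, then Theorem 2.1 gives $T\in O(n+2,1)$ with $\bar Y=YT$ (after the reparametrization involved, which serves as $\varphi$; it is the identity when the equivalence is realized by a pure M\"{o}bius transformation). Since $T$ is a Lorentz isometry, $\bar g=\langle d\bar Y,d\bar Y\rangle=\langle dY,dY\rangle=g$; and because $N$ is obtained from $Y$ by the $g$-Laplacian, the $Y_i=E_i(Y)$ by differentiating along a $g$-orthonormal frame, and $\xi$ is the unit spacelike vector completing the frame orthogonally to $Y,N,Y_1,\dots,Y_n$ (its sign pinned down by continuity), all of $N,Y_i,\xi$ transform by the same constant $T$; hence $\bar{\bf B}={\bf B}$ as well. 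Thus $\varphi$ preserves $g$ and ${\bf B}$.

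For the ``if'' direction, let $\varphi:M^n\to M^n$ be a diffeomorphism with $\varphi^{*}\bar g=g$ and $\varphi^{*}\bar{\bf B}={\bf B}$. Replacing $\bar f$ by $\bar f\circ\varphi$ we may assume $\varphi=\mathrm{id}$, so that $g=\bar g$ and ${\bf B}=\bar{\bf B}$ as tensors on $M^n$. Because $n\ge 3$, the observation recorded just before the theorem applies: using a common $g$-orthonormal coframe $\{\omega_i\}$, the connection forms $\omega_{ij}$ are those of the common Levi-Civita connection, the Blaschke coefficients $A_{ij}$ are recovered from the Ricci tensor of $g$, from ${\bf B}$, and from $\mathrm{tr}\,{\bf A}$ by \eqref{equa5} and \eqref{equa6} (here $n-2\ne 0$ is used), and the M\"{o}bius form $C_i$ is recovered from the covariant derivatives of ${\bf B}$ by contracting \eqref{equa3}. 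Hence $f$ and $\bar f$ have identical structure coefficients, so the two frames
$$\Phi=(Y,N,Y_1,\dots,Y_n,\xi),\qquad \bar\Phi=(\bar Y,\bar N,\bar Y_1,\dots,\bar Y_n,\bar\xi)$$
satisfy \emph{one and the same} linear system $d\Phi=\Phi\,\Omega$, $d\bar\Phi=\bar\Phi\,\Omega$, where $\Omega$ is the $(n+3)\times(n+3)$ matrix of $1$-forms read off from the structure equations. At every point both $\Phi$ and $\bar\Phi$ are bases of $\mathbb{R}^{n+3}_1$ with the same table of inner products ($\langle Y,Y\rangle=\langle N,N\rangle=0$, $\langle Y,N\rangle=1$, $\langle Y_i,Y_j\rangle=\delta_{ij}$, $\langle\xi,\xi\rangle=1$, all remaining products zero), so at a fixed point $p\in M^n$ there is $T\in O(n+2,1)$ with $\bar\Phi(p)=T\,\Phi(p)$. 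Then $T\Phi$ and $\bar\Phi$ solve the same linear system and agree at $p$ (the Lorentz action acts on the side opposite to $\Omega$, so $T\Phi$ is again a solution), whence by uniqueness for linear ODEs along curves $\bar\Phi=T\Phi$ on the connected manifold $M^n$. In particular $\bar Y$ and $Y$ differ by an element of $O(n+2,1)$, and Theorem 2.1 gives that $f$ and $\bar f$ are M\"{o}bius equivalent.

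There is no deep step in this argument; it is essentially bookkeeping. The one essential point is the reconstruction of $\{\omega_{ij},A_{ij},C_i\}$ from $\{g,{\bf B}\}$, which is precisely where $n\ge 3$ enters (through $n-2\ne 0$ in \eqref{equa5} and through the contraction of the Codazzi-type identity \eqref{equa3}), and this is already recorded in the text. Apart from that, one should write $\Omega$ out explicitly, check that the $O(n+2,1)$-action preserves the defining relations of the frame (so the initial matching $\bar\Phi(p)=T\,\Phi(p)$ can indeed be chosen within $O(n+2,1)$), and note that $M^n$ is assumed connected so that ODE uniqueness propagates the identity globally. No integrability conditions need to be checked by hand in this direction, since $\bar\Phi$ is the honest canonical frame of the given hypersurface $\bar f$ and hence automatically satisfies its own structure equations.
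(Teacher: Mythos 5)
Your proposal is correct and follows exactly the route the paper indicates: the paper cites this congruence theorem from Wang and justifies it only by the remark that all coefficients in the structure equations ($\omega_{ij}$, $A_{ij}$, $C_i$) are determined by $\{g,\mathbf{B}\}$ when $n\geq 3$ (via the Levi-Civita connection, the contraction of \eqref{equa3}, and \eqref{equa5}--\eqref{equa6} with $n-2\neq 0$), after which the standard moving-frame/ODE-uniqueness argument in $O(n+2,1)$ gives the result. Your write-up simply makes that standard argument explicit, so there is nothing to correct.
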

By equation (\ref{equa2}), we have
\begin{equation}\label{cb}
dC=0\Leftrightarrow\sum_k(B_{ik}A_{kj}-B_{jk}A_{ki})=0.
\end{equation}
For the second covariant derivative of $B_{ij}$ defined by
$$dB_{ij,k}+\sum_mB_{mj,k}\omega_{mi}+\sum_mB_{im,k}\omega_{mj}+
\sum_mB_{ij,m}\omega_{mk}=\sum_mB_{ij,km}\omega_m,$$
we have the following Ricci identities
$$B_{ij,kl}-B_{ij,lk}=\sum_mB_{mj}R_{mikl}+\sum_mB_{im}R_{mjkl}.$$
We call eigenvalues
of $(B_{ij})$ as \emph{M\"{o}bius principal curvatures} of $f$. Clearly the
number of distinct M\"{o}bius principal curvatures is the same as
that of its distinct Euclidean principal curvatures.

Let $k_1,\cdots,k_n$ be the principal curvatures of $f$, and
$\{\lambda_1,\cdots,\lambda_n\}$ the corresponding M\"{o}bius
principal curvatures, then the curvature sphere of principal
curvature $k_i$ is
$$\xi_i=\lambda_iY+\xi=\left(\frac{1+|f|^2}{2}k_i+f\cdot e_{n+1},\frac{1-|f|^2}{2}k_i-f\cdot e_{n+1},k_if+e_{n+1}\right)~.$$
Note that $k_i=0$ if and only if,
$$<\xi_i,(1,-1,0,\cdots,0)>=0.$$
This means that the curvature sphere of principal curvature $k_i$
is a hyperplane in $\mathbb{R}^{n+1}$.

\vskip 1 cm
\section{Generic conformally flat hypersurfaces in $\mathbb{R}^4$}
In this section, we give some local properties of the M\"{o}bius invariants of generic conformally flat hypersurfaces in $\mathbb{R}^4$.

Let $(M^n,g)$ be an n-dimensional Riemannian manifold,  and $\{e_1,\cdots e_n\}$ be a local orthonormal frame field on $(M^n,g)$, and $\{\omega _1,
\cdots ,\omega _n\}$ its dual coframe field. The Weyl conformal tensor $W=\sum_{ijkl}W_{ijkl}\omega_i\otimes\omega_j\otimes\omega_k\otimes\omega_l$
and the Schouten tensor $S=\sum_{ij}S_{ij}\omega_i\otimes\omega_j$ of $(M^n,g)$ are defined by, respectively,
\begin{equation*}
\begin{split}
&W_{ijkl}=R_{ijkl}-\frac{1}{n-2}\{R_{ik}\delta _{jl}-R_{jk}\delta _{il}+\delta _{ik}R_{jl}-
\delta _{jk}R_{il}-\frac{R}{(n-1)}(\delta_{ik} \delta_{jl} -\delta_{jk} \delta_{il})\},\\
&S_{ij}=R_{ij}-\frac{R}{2(n-1)}\delta_{ij},
\end{split}
\end{equation*}
where $R_{ij}$ denotes the Ricci curvature and $R$ the scalar curvature of $(M^n,g)$.

A result of Weyl  states that a Riemannian manifold $(M^n,g)$ of dimension $n (\geq 4)$ is conformally flat if and only if the Weyl conformal tensor vanishes,
and a Riemannian manifold $(M^n,g)$ of dimension $3$ is conformally flat if and only if the Schouten tensor is a Codazzi tensor (i.e., $S_{ij,k}=S_{ik,j}$).
Using the Weyl's result, we can prove the following lemma (or see \cite{yau}),
\begin{lemma}\cite{yau}\label{lem31}
A Riemannian product $(M_1,g_1)\times (M_2,g_2)=(M_1\times M_2, g_1+g_2)$ is conformally flat if and only if either

(1) $(M_i,g_i)$ is one dimensional curve, and $(M_j,g_j), (i\neq j)$ is a space form, or

(2) $(M_1,g_1)$ and $(M_2,g_2)$ are space forms of dimension at least two, with non-zero opposite curvatures.
\end{lemma}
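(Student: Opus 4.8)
The plan is to deduce the lemma from the two classical conformal‑flatness criteria recalled just above: vanishing of the Weyl tensor $W$ when $n:=n_1+n_2\ge 4$ (with $n_i=\dim M_i$), and the Codazzi property of the Schouten tensor $S$ when $n=3$. The starting point is that the curvature tensor of a Riemannian product is block‑diagonal. I would choose an orthonormal frame $e_1,\dots,e_{n_1}$ tangent to $M_1$ and $e_{n_1+1},\dots,e_n$ tangent to $M_2$, and use latin indices $a,b,\dots$ for the $M_1$‑directions and greek indices $\alpha,\beta,\dots$ for the $M_2$‑directions. Then $R_{abcd}=R^{(1)}_{abcd}$, $R_{\alpha\beta\gamma\delta}=R^{(2)}_{\alpha\beta\gamma\delta}$, every mixed component of $R$ vanishes, the Ricci tensor is correspondingly block‑diagonal, and $R=R_1+R_2$ is the sum of the two scalar curvatures.

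For $n\ge 4$ I would first work out the mixed components of the Weyl tensor. Using the block structure, $W_{a\alpha b\beta}=-\tfrac{1}{n-2}\big(R^{(1)}_{ab}\delta_{\alpha\beta}+\delta_{ab}R^{(2)}_{\alpha\beta}-\tfrac{R}{n-1}\delta_{ab}\delta_{\alpha\beta}\big)$, while the remaining mixed components vanish identically. Hence $W\equiv 0$ forces, taking $a\ne b$, that $M_1$ (if $n_1\ge 2$) is Einstein, say $R^{(1)}=p\,g_1$, and likewise $R^{(2)}=q\,g_2$; taking $a=b$, $\alpha=\beta$, it forces $p+q=\tfrac{R}{n-1}$. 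Feeding this back into the pure components, $W_{abcd}=0$ and $W_{\alpha\beta\gamma\delta}=0$ become $R^{(1)}_{abcd}=\tfrac{p-q}{n-2}(\delta_{ac}\delta_{bd}-\delta_{bc}\delta_{ad})$ and $R^{(2)}_{\alpha\beta\gamma\delta}=\tfrac{q-p}{n-2}(\delta_{\alpha\gamma}\delta_{\beta\delta}-\delta_{\beta\gamma}\delta_{\alpha\delta})$: both factors then have constant sectional curvature, and these curvatures are opposite. When one factor is a curve the Einstein step for it is vacuous ($p=0$), but the computation still forces the other factor to be a space form — this is alternative (1); two genuine space forms of opposite non‑zero curvature is alternative (2) (a flat product being subsumed under (1) after splitting off a line). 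For the converse (``if'') direction I would substitute the curvature tensor of $I\times N^{n-1}(c)$, respectively of $M_1^{n_1}(c)\times M_2^{n_2}(-c)$, into the definition of $W$; using the scalar‑curvature identities $\tfrac{R}{n-1}=(n-2)c$ in the first case and $\tfrac{R}{n-1}=(n_1-n_2)c$ in the second, all components of $W$ cancel, so these products are conformally flat.

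When $n=3$ the Weyl tensor vanishes automatically, and one must instead use that $g$ is conformally flat iff $S$ is a Codazzi tensor. The only splitting is $M_1\times M_2$ with $n_1=1$, $n_2=2$. Writing $\kappa$ for the Gaussian curvature of the surface $M_2$ and $\theta$ for the unit covector along $M_1$, one gets $R=2\kappa$ and hence $S=-\tfrac{\kappa}{2}\,\theta\otimes\theta+\tfrac{\kappa}{2}\,g_2$; because the Levi‑Civita connection of the product splits, the Codazzi equations $S_{ij,k}=S_{ik,j}$ reduce to the single condition $d\kappa=0$, i.e.\ $M_2$ has constant Gaussian curvature — once more alternative (1).

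The part I expect to be the real labor is the algebraic bookkeeping in the $n\ge 4$ case: one has to pass through the index types $W_{abcd}$, $W_{\alpha\beta\gamma\delta}$, $W_{a\alpha b\beta}$ and the ``$3+1$'' mixed components and verify each cancellation, which rests on elementary identities such as $2(n_1-1)-(n_1-n_2)=n-2$. The conceptual content — block‑diagonality of $R$, then the implication ``$W=0$ off the diagonal $\Rightarrow$ Einstein'' followed by ``$W=0$ on the diagonal $\Rightarrow$ constant curvature'' — is routine.
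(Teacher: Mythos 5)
Your proposal is correct and follows exactly the route the paper itself indicates: the paper does not write out a proof but cites Yau and remarks that the lemma follows from Weyl's criterion (vanishing Weyl tensor for $n\ge 4$, Codazzi--Schouten for $n=3$), which is precisely the computation you carry out on the block-diagonal curvature of the product. The only point you gloss over is why the pointwise-constant sectional curvatures are actually constant (Schur's lemma when a factor has dimension $\ge 3$, and the separation-of-variables argument $c_1(x)=-c_2(y)$ when both factors are surfaces), but this is a one-line addition.
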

For hypersurfaces in $\mathbb{R}^{n+1}$,
when $n\geq4$, it is well-known from the Cartan-Schouten that a hypersurface $f: M^n\to \mathbb{R}^{n+1}$ is conformally flat if and only if at least $n-1$ of the principal curvatures coincide
at each point. But Cartan-Schouten's result is no longer true in dimension
$3$, since there exist generic conformally flat hypersurfaces.

Let $f: M^3\to \mathbb{R}^4$ be a generic hypersurface. We choose an orthonormal basis $\{E_{1},E_2,E_{3}\}$ with respect to the M\"{o}bius metric $g$ such that
\begin{equation}\label{second}
(B_{ij})=diag\{b_1, b_2, b_3\},~~~b_1< b_2< b_3.
\end{equation}
Let $\{\omega_1,\omega_2,\omega_3\}$ be the dual of $\{E_{1},E_2,E_{3}\}$. The conformal fundamental forms of $f$ are defined by
$$\Theta_1=\sqrt{(b_3-b_1)(b_2-b_1)}\omega_1,~\Theta_2=\sqrt{(b_3-b_2)(b_2-b_1)}\omega_2,~\Theta_3=\sqrt{(b_3-b_1)(b_3-b_2)}\omega_3.$$

Using the equation (\ref{equa5}) and (\ref{equa6}), the  Schouten tensor of $f$ is
$$S=\sum_{ij}(-\sum_lB_{il}B_{lj}+A_{ij}+\frac{1}{6}\delta_{ij})\omega_i\wedge\omega_j.$$
Thus
\begin{equation}\label{schouten}
S_{ij,k}=-\sum_l(B_{il,k}B_{lj}+B_{il}B_{lj,k})+A_{ij,k}.
\end{equation}
If the hypersurface $f$ is conformally flat, then $S_{ij,k}=S_{ik,j}$. Combining the equations (\ref{equa1}) and (\ref{second}), we
obtain the following equation
\begin{equation}\label{flat1}
b_kB_{ik,j}-b_jB_{ij,k}=2(B_{ij}C_k-B_{ik}C_j).
\end{equation}
Using the equation (\ref{equa3}), we have the following equations,
\begin{equation}\label{flat2}
\begin{split}
&B_{12,3}=B_{13,2}=0,\\
&B_{ij,i}=\frac{3b_i}{b_j-b_i}C_j,~~B_{ii,j}=\frac{b_i-b_k}{b_j-b_i}C_j, ~~i\neq j, j\neq k, i\neq k.
\end{split}
\end{equation}
Using
$dB_{ij}+\sum_kB_{kj}\omega_{ki}+\sum_kB_{ik}\omega_{kj}=\sum_kB_{ij,k}\omega_k$
and (\ref{second}), we get
\begin{equation}\label{ww}
\omega_{ij}=\sum_k\frac{B_{ij,k}}{b_i-b_j}\omega_k=\frac{B_{ij,i}}{b_i-b_j}\omega_i+\frac{B_{ij,j}}{b_i-b_j}\omega_j.
\end{equation}
The following lemma is trivial by the equation (\ref{flat2}) and (\ref{ww}), (or see \cite{je3},\cite{su3}).
\begin{lemma}
Let $M^3\to \mathbb{R}^4$ be a generic hypersurface. The following are equivalent:
(1), the hypersurface is conformally flat;\\
(2), the schouten tensor is a Codazzi tensor;\\
(3), the conformal fundamental forms $\Theta_1,\Theta_2,\Theta_3$ are closed.
\end{lemma}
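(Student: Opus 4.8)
\noindent\textit{Proof plan.} The equivalence $(1)\Leftrightarrow(2)$ is nothing but Weyl's criterion in dimension three recalled above: a $3$-manifold is conformally flat exactly when its Schouten tensor is a Codazzi tensor. Moreover the discussion preceding the lemma already establishes $(2)\Rightarrow(\ref{flat1})\Rightarrow(\ref{flat2})$, and each of these implications is reversible once one uses only the universally valid identities (\ref{equa1}), (\ref{equa3}) and the algebraic relations (\ref{equa6}): indeed, with $(B_{ij})$ diagonal one computes
\[
S_{ij,k}-S_{ik,j}=-(b_i+b_j)B_{ij,k}+(b_i+b_k)B_{ik,j}+b_i\delta_{ik}C_j-b_i\delta_{ij}C_k ,
\]
and, using (\ref{equa3}), this vanishes for all $i,j,k$ if and only if (\ref{flat1}) holds, while (\ref{flat1}) is in turn equivalent to the explicit formulas (\ref{flat2}) after one invokes $b_1+b_2+b_3=0$. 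So the lemma reduces to proving $(\ref{flat2})\Leftrightarrow(3)$, which I would settle by a direct computation of $d\Theta_\alpha$.

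To do so, I would write $\Theta_1=p_1\omega_1$ with $p_1=\sqrt{(b_3-b_1)(b_2-b_1)}$ and feed into $d\Theta_1=dp_1\wedge\omega_1+p_1\,d\omega_1$ the ingredients $db_i=\sum_k B_{ii,k}\omega_k$, the structure equation $d\omega_1=\omega_{12}\wedge\omega_2+\omega_{13}\wedge\omega_3$, and the unabridged connection form $\omega_{ij}=\sum_k\frac{B_{ij,k}}{b_i-b_j}\omega_k$ from (\ref{ww}). A short manipulation then puts $d\Theta_1$ in the form
\[
d\Theta_1=P\,\omega_1\wedge\omega_2+Q\,\omega_1\wedge\omega_3+p_1\,\frac{b_3-b_2}{(b_1-b_2)(b_1-b_3)}\,B_{12,3}\,\omega_2\wedge\omega_3 ,
\]
so that its $\omega_2\wedge\omega_3$ component vanishes precisely when $B_{12,3}=0$ (recall $B_{12,3}=B_{13,2}=B_{23,1}$ by (\ref{equa3})). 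Eliminating $B_{22,2}$ and $B_{33,2}$ from $P$ by the two trace relations $\sum_i B_{ii,2}=0$ and $\sum_i b_iB_{ii,2}=0$ contained in (\ref{equa6}), and then using (\ref{equa3}) in the form $B_{11,2}=B_{12,1}+C_2$, one checks that $P=0$ is equivalent to $B_{12,1}=\frac{3b_1}{b_2-b_1}C_2$, and similarly $Q=0\Leftrightarrow B_{13,1}=\frac{3b_1}{b_3-b_1}C_3$. Running the same computation for $\Theta_2$ and $\Theta_3$ produces the remaining identities $B_{ij,i}=\frac{3b_i}{b_j-b_i}C_j$ (and repeats $B_{12,3}=0$), after which $B_{ii,j}=B_{ij,i}+C_j$ yields the $B_{ii,j}$ formulas. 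Hence $\Theta_1,\Theta_2,\Theta_3$ are simultaneously closed if and only if (\ref{flat2}) holds, i.e. if and only if $(2)$ holds, which closes the cycle.

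The only real obstacle is bookkeeping: one has to track which component of which $d\Theta_\alpha$ encodes which scalar relation, and to collapse the coefficients one must repeatedly use $b_1+b_2+b_3=0$ together with $\sum_i b_i^2=\tfrac23$ (identities such as $2b_1+b_2=b_1-b_3$ appear throughout). One caveat deserves emphasis: the second, two-term expression for $\omega_{ij}$ displayed in (\ref{ww}) already presupposes $B_{12,3}=0$, so for the implication $(3)\Rightarrow(\ref{flat2})$ one must keep the full three-term form of $\omega_{ij}$ — precisely that term is what produces the $\omega_2\wedge\omega_3$ summand above and hence delivers $B_{12,3}=0$.
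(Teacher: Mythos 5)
Your proposal is correct and follows exactly the route the paper gestures at: the paper gives no proof beyond declaring the lemma ``trivial by (\ref{flat2}) and (\ref{ww})'', and your argument — Weyl's criterion for $(1)\Leftrightarrow(2)$, the reduction of the Codazzi condition to (\ref{flat1}) and then (\ref{flat2}), and the direct computation of $d\Theta_\alpha$ using the full three-term form of $\omega_{ij}$ — is precisely the intended chain with the details filled in. I spot-checked your displayed formula for $S_{ij,k}-S_{ik,j}$, the $\omega_2\wedge\omega_3$ coefficient of $d\Theta_1$, and the elimination leading to $P=0\Leftrightarrow B_{12,1}=\frac{3b_1}{b_2-b_1}C_2$; all are correct.
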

Next, we give the standard examples of generic conformally flat hypersurfaces in $\mathbb{R}^4$.
\begin{ex}\label{ex31}
Let $u: M^2\longrightarrow \mathbb{R}^3$ be an immersed surface. We define
\emph{the cylinder over $u$} in $\mathbb{R}^4$ as
\begin{equation*}
f=(id,u):\mathbb{R}^1\times M^2\longrightarrow
\mathbb{R}^1 \times \mathbb{R}^3=\mathbb{R}^4,~~~~
f(t,y)=(t,u(x)),
\end{equation*}
where $id:\mathbb{R}^1\longrightarrow \mathbb{R}^1$ is the identity map.
\end{ex}
 The first
fundamental form $I$ and the second fundamental form $II$ of
the cylinder $f$ are, respectively,
\begin{equation*}
I=I_{\mathbb{R}^1}+I_u, \;\; II=II_u,
\end{equation*}
where $I_u,II_u$ are the first and second fundamental forms of $u$,
respectively, and $I_{\mathbb{R}^1}$ denotes the standard metric of $\mathbb{R}^1$.
Let $\{k_1,k_2\}$ be principal curvatures of surface $u$. Obviously the principal curvatures of hypersurface $f$ are
$\{0,k_1,k_2\}.$
The M\"{o}bius metric $g$ of hypersurface $f$ is
\begin{equation}\label{eq-g1}
g=\rho^2I=\frac{n}{n-1}(|II|^2-nH^2)I
=\left(4H_u^2-3K_u\right)(I_{\mathbb{R}^1}+I_u),
\end{equation}
where $H_u,K_u$ are the mean curvature of $u$ and
Gaussian curvature of $u$, respectively. Therefore combining Lemma \ref{lem31} we have the following result.
\begin{PROPOSITION}\label{prop31}
Let $f:M^3\to \mathbb{R}^4$ be a cylinder over a surface $u: M^2\to \mathbb{R}^3$, then the cylinder $f$ is conformally flat if and only if
the surface $u$ is of  constant Gaussian curvature.
\end{PROPOSITION}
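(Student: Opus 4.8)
The plan is to reduce the statement directly to Yau's Lemma \ref{lem31} by viewing the induced metric of the cylinder as a Riemannian product. By the definition of conformal flatness for hypersurfaces (see the Introduction), $f$ is conformally flat if and only if its induced metric $I$ is conformally flat. From Example \ref{ex31} and the formulas following it, the induced metric of the cylinder $f=(id,u):\mathbb{R}^1\times M^2\to\mathbb{R}^4$ is the Riemannian product
$$I=I_{\mathbb{R}^1}+I_u$$
of the one-dimensional line $(\mathbb{R}^1,I_{\mathbb{R}^1})$ with the surface $(M^2,I_u)$.

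Next I would apply Lemma \ref{lem31} to the product $(\mathbb{R}^1,I_{\mathbb{R}^1})\times(M^2,I_u)$. Alternative $(2)$ of the lemma cannot occur, since it requires both factors to have dimension at least two while $\mathbb{R}^1$ is one-dimensional. Hence the product $I$ is conformally flat if and only if alternative $(1)$ holds, i.e. if and only if $(M^2,I_u)$ is a space form. A two-dimensional space form is precisely a surface of constant sectional curvature, and the sectional curvature of a surface is its Gaussian curvature $K_u$. Therefore $f$ is conformally flat if and only if $K_u$ is constant, which is the claimed equivalence.

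The argument is essentially immediate once Lemma \ref{lem31} is in hand, so there is no genuine computational obstacle; the only points deserving attention are that one must use conformal flatness of the induced metric $I$ (though, since the M\"{o}bius metric $g=\rho^2 I$ displayed in (\ref{eq-g1}) is globally conformal to $I$, one could equally argue with $g$, conformal flatness being a conformally invariant condition), and that the one-dimensional factor $\mathbb{R}^1$ rules out case $(2)$ of the lemma, so that constancy of $K_u$ turns out to be both necessary and sufficient.
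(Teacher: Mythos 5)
Your proof is correct and follows essentially the same route as the paper: the induced metric of the cylinder is the Riemannian product $I_{\mathbb{R}^1}+I_u$, and Lemma \ref{lem31} (with case $(2)$ excluded by the one-dimensional factor) gives conformal flatness precisely when $(M^2,I_u)$ has constant Gaussian curvature. Your remark that one may equally work with the conformal Möbius metric $g=\rho^2 I$ matches the way the paper records the computation before invoking the lemma.
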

\begin{ex}\label{ex32}
Let $u:M^2\longrightarrow \mathbb{S}^3\subset \mathbb{R}^4$ be an
immersed surface.
We define \emph{the cone over $u$} in $\mathbb{R}^4$ as
\begin{equation*}
f:\mathbb{R}^+\times M^2\longrightarrow \mathbb{R}^4,
~~~~f(t,x)=tu(x).
\end{equation*}
\end{ex}
The first and second
fundamental forms of the cone $f$ are, respectively,
\[
I=I_{\mathbb{R}^1}+t^2I_u, \;\; II=t~II_u,
\]
where $I_u,II_u,I_{\mathbb{R}^1}$ are understood as before.
Let $\{k_1,k_2\}$ be principal curvatures of surface $u$.
The principal curvatures of hypersurface $f$ are
$\{0,\frac{1}{t}k_1,\frac{1}{t}k_2\}.$
The M\"{o}bius metric $g$ of hypersurface $f$ is
\begin{equation}\label{eq-g2}
g=\frac{1}{t^2}\left[4H_u^2-3(K_u-1)\right]
(I_{\mathbb{R}^1}+t^2I_u)
=\left[4H_u^2-3(K_u-1)\right]
(I_{\mathbb{H}^1}+I_u),
\end{equation}
where $H_u,K_u$ are the mean curvature and Gaussian curvature of
$u$, respectively, $I_{\mathbb{H}^1}=\frac{dt^2}{t^2}$. Therefore combining Lemma \ref{lem31} we have the following result.
\begin{PROPOSITION}\label{prop32}
Let $f:M^3\to \mathbb{R}^4$ be a cone over a surface $u: M^2\to \mathbb{S}^3$, then the cone $f$ is conformally flat if and only if
the surface $u$ is of constant Gaussian curvature.
\end{PROPOSITION}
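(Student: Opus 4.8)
The plan is to reduce the statement to Lemma~\ref{lem31} by exploiting the conformal invariance of conformal flatness. By definition, the cone $f$ is conformally flat if and only if its induced metric $I = I_{\mathbb{R}^1} + t^2 I_u$ is a conformally flat metric on $\mathbb{R}^+ \times M^2$. Since conformal flatness depends only on the conformal class of the metric, and since
\[
\frac{1}{t^2}\, I = \frac{dt^2}{t^2} + I_u = I_{\mathbb{H}^1} + I_u,
\]
the induced metric $I$ is conformally flat if and only if the Riemannian product metric $I_{\mathbb{H}^1} + I_u$ on $\mathbb{R}^+ \times M^2$ is conformally flat. (Equivalently, one may run the same argument with the M\"{o}bius metric $g$ of (\ref{eq-g2}), which is conformal to $I$ as well.)

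Next I would observe that $(\mathbb{R}^+, I_{\mathbb{H}^1})$ is a one-dimensional Riemannian manifold, hence locally isometric to an interval of $\mathbb{R}^1$ and in particular flat. Thus $I_{\mathbb{H}^1} + I_u$ is a Riemannian product of a one-dimensional curve and the surface $(M^2, I_u)$, so case~$(1)$ of Lemma~\ref{lem31} applies: this product is conformally flat if and only if $(M^2, I_u)$ is a space form, i.e.\ has constant Gaussian curvature. Chaining the two equivalences yields the proposition.

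No serious obstacle is anticipated, since the essential content is already packaged in Lemma~\ref{lem31}; the only point to verify is the explicit conformal rescaling of the cone's induced metric, which is precisely the computation recorded in (\ref{eq-g2}). Should one want a self-contained argument avoiding Lemma~\ref{lem31}, the alternative is to check directly that the Schouten tensor of $I_{\mathbb{H}^1} + I_u$ is a Codazzi tensor exactly when $K_u$ is constant, reducing the Codazzi identity to an equation involving $dK_u$ via the product structure; but invoking Lemma~\ref{lem31} is cleaner.
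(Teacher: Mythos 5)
Your proof is correct and follows essentially the same route as the paper: both rescale the cone's metric conformally to the product $I_{\mathbb{H}^1}+I_u$ (the paper does this via the M\"obius metric in (\ref{eq-g2}), you do it directly on the induced metric) and then invoke case (1) of Lemma \ref{lem31}. No issues.
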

\begin{ex}\label{ex33}
Let $\mathbb{R}^3_+=\{(x_1,x_2,x_3)\in \mathbb{R}^3|x_3>0\}$
be the upper half-space endowed with the standard
hyperbolic metric
\[
ds^2=\frac{1}{x_3^2}[dx_1^2+dx_2^2+dx_3^2]~.
\]
Let $u=(x_1,x_2,x_3):M^2\longrightarrow \mathbb{R}^3_+$ be
an immersed surface. We define
\emph{rotational hypersurface over $u$} in $\mathbb{R}^4$ as
\begin{equation*}
f:\mathbb{S}^1\times M^2\longrightarrow \mathbb{R}^4,~~~
f(\phi,x_1,x_2,x_3)=(x_1,x_2,x_3\phi),
\end{equation*}
where $\phi:\mathbb{S}^1\longrightarrow \mathbb{S}^1$ is the unit circle.
\end{ex}
The first fundamental form and the second fundamental form of $u$ is, respectively,
\begin{equation*}
\begin{split}
&I_u=\frac{1}{x_3^2}(dx_1\cdot dx_1+dx_2\cdot dx_2+dx_3\cdot dx_3),\\
&II_u=\frac{1}{x_3^2}(dx_1\cdot d\eta_1+dx_2\cdot d\eta_2
+dx_3\cdot d\eta_3)-\frac{\eta_3}{x_3}I_u.
\end{split}
\end{equation*}
The first and the second fundamental forms
of $f$ is,  respectively,
\[
I=dx\cdot dx=x_3^2(I_{\mathbb{S}^1}+I_u),
~~II=x_3II_u-\eta_3I_u-\eta_3I_{\mathbb{S}^1}.
\]
 Let
$\{k_1,k_2\}$ be principal curvatures of $u$. Then principal
curvatures of hypersurface $f$ are
$\{\frac{-\eta_3}{x_3^2},\frac{k_1}{x_3}-\frac{\eta_3}{x_3^2}~,~
\frac{k_2}{x_3}-\frac{\eta_3}{x_3^2}\}.$
Thus the M\"{o}bius metric of the rotational hypersurface $f$ is
\begin{equation}\label{eq-g3}
g=\rho^2I=\left[4H_u^2-3(K_u+1)\right]
(I_{\mathbb{S}^1}+I_u),
\end{equation}
where $H_u,K_u$ are the mean curvature and Gaussian curvature of
$u$, respectively.
Therefore combining Lemma \ref{lem31} we have the following result,
\begin{PROPOSITION}\label{prop33}
Let $f:M^3\to \mathbb{R}^4$ be a rotational hypersurface over a surface $u: M^2\to \mathbb{R}^3_+$, then the hypersurface $f$ is conformally flat if and only if
 the surface $u$ is of constant  Gaussian curvature.
\end{PROPOSITION}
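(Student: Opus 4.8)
The plan is to reduce the conformal flatness of $f$ to that of a Riemannian product on $\mathbb{S}^1\times M^2$ and then quote Lemma \ref{lem31}. First recall that, by definition, $f$ is conformally flat exactly when its induced metric $I$ is conformally flat, and that the property of being conformally flat is invariant under a conformal rescaling of the metric (conformality being an equivalence relation on metrics). In particular $I$ is conformally flat if and only if the M\"obius metric $g=\rho^2 I$ is. From (\ref{eq-g3}) we have $g=\left[4H_u^2-3(K_u+1)\right](I_{\mathbb{S}^1}+I_u)$, where the factor $4H_u^2-3(K_u+1)=\rho^2$ is strictly positive and is a function on $M^2$ alone (it does not involve the circle coordinate $\phi$). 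Hence $g$ lies in the conformal class of the product metric $I_{\mathbb{S}^1}+I_u$ on $\mathbb{S}^1\times M^2$, so $f$ is conformally flat if and only if the Riemannian product $(\mathbb{S}^1\times M^2,\,I_{\mathbb{S}^1}+I_u)$ is conformally flat.

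Next I would apply Lemma \ref{lem31} to the product $(\mathbb{S}^1,I_{\mathbb{S}^1})\times(M^2,I_u)$. Since the first factor is a one-dimensional curve, alternative $(2)$ of Lemma \ref{lem31} (which requires both factors to have dimension at least two) cannot occur; thus the product is conformally flat if and only if alternative $(1)$ holds, i.e. if and only if the remaining factor $(M^2,I_u)$ is a space form, which for a surface means constant Gaussian curvature. Combining this with the previous paragraph gives the proposition. Note that this argument is structurally identical to the proofs of Propositions \ref{prop31} and \ref{prop32}: only the one-dimensional line factor changes ($I_{\mathbb{S}^1}$ here, $I_{\mathbb{R}^1}$ for the cylinder, $I_{\mathbb{H}^1}$ for the cone), and a one-dimensional factor never affects the applicability of Lemma \ref{lem31}.

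The ingredients feeding into this reduction — the first and second fundamental forms of $f$, its principal curvatures, and the resulting value $\rho^2=4H_u^2-3(K_u+1)$ recorded just before (\ref{eq-g3}) — are routine computations from the parametrization $f(\phi,x_1,x_2,x_3)=(x_1,x_2,x_3\phi)$, so I would not dwell on them. The only point that genuinely requires care is legitimizing the passage to the product metric, namely checking that the conformal factor is everywhere positive and independent of the variable $\phi$; positivity is automatic since $f$ is assumed to have no umbilic points (equivalently $\rho^2>0$), and $\phi$-independence is visible directly from the displayed formula for $g$. With those two observations the proof is a direct citation of Lemma \ref{lem31}, and there is no substantial obstacle.
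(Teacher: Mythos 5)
Your proposal is correct and follows exactly the paper's route: compute the M\"obius metric (equivalently, note that $I=x_3^2(I_{\mathbb{S}^1}+I_u)$ is conformal to the product metric $I_{\mathbb{S}^1}+I_u$), and then apply Lemma \ref{lem31}, whose case $(2)$ is excluded because one factor is one-dimensional. The paper's own proof is precisely this computation followed by the citation of Lemma \ref{lem31}, so there is nothing to add.
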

\begin{PROPOSITION}\label{prop}
Let $f: M^3\to \mathbb{R}^4$ be one of  generic conformally flat hypersurfaces given by above three examples (\ref{ex31}) (\ref{ex32}) (\ref{ex33}).
Then the M\"{o}bius form is closed  $(i.e., dC=0)$.
\end{PROPOSITION}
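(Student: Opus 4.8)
The plan is to exploit the criterion (\ref{cb}). In the frame of (\ref{second}) one has $\sum_k(B_{ik}A_{kj}-B_{jk}A_{ki})=(b_i-b_j)A_{ij}$, and since $b_1<b_2<b_3$, (\ref{cb}) says exactly that $dC=0$ is equivalent to $A_{ij}=0$ for all $i\neq j$, i.e. to the Blaschke tensor being diagonal in the M\"obius principal frame $\{E_1,E_2,E_3\}$. Feeding this into the Gauss-type identity (\ref{equa5}) with $n=3$, and using that $\sum_kB_{ik}B_{kj}=b_i^2\delta_{ij}$ and $(tr\,{\bf A})\delta_{ij}$ are diagonal in that frame, one gets $A_{ij}=R_{ij}$ whenever $i\neq j$. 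Hence it suffices to show that, for each of the three examples, the Ricci tensor of the M\"obius metric $g$ is diagonal in the M\"obius principal frame.

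By (\ref{eq-g1}), (\ref{eq-g2}) and (\ref{eq-g3}), in each case $g=e^{2\sigma}g_0$, where $g_0=g_{\rm fib}+I_u$ is the Riemannian product of the metric of a one-dimensional space form ($\mathbb{R}^1$, $\mathbb{H}^1$, or $\mathbb{S}^1$) with the induced metric $I_u$ of the profile surface $u$, and $e^{2\sigma}$ equals $4H_u^2-3K_u$, $4H_u^2-3(K_u-1)$, or $4H_u^2-3(K_u+1)$ according as $f$ is a cylinder, a cone, or a rotational hypersurface, and in each case it is a positive function depending only on the $M^2$-factor. Since the principal directions of $f$ transverse to the fibre coincide with those of $u$ (the transverse part of the second fundamental form of $f$ being a combination of $I_u$ and $II_u$), the M\"obius principal frame consists of the unit fibre field, which I label $E_1$, together with the two unit principal fields $E_2,E_3$ of $u$. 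As $f$ is conformally flat, Propositions \ref{prop31}--\ref{prop33} force $K_u$ to be constant, so $g_0$ is a product of two space forms, one of them flat and one-dimensional; thus $\mathrm{Ric}(g_0)$ vanishes on the fibre direction and equals $K_u I_u$ on $TM^2$, in particular it is diagonal in $\{E_1,E_2,E_3\}$.

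Next I would apply the conformal change law of the Ricci tensor in dimension three,
\begin{equation*}
\mathrm{Ric}(g)(X,Y)=\mathrm{Ric}(g_0)(X,Y)-\mathrm{Hess}_{g_0}\sigma(X,Y)+X(\sigma)\,Y(\sigma)-\big(\Delta_{g_0}\sigma+|\nabla_{g_0}\sigma|^2\big)g_0(X,Y),
\end{equation*}
and evaluate it on $E_i,E_j$ with $i\neq j$. The first and last terms are diagonal by the previous paragraph; and since $\sigma$ lives on the $M^2$-factor while $g_0$ is a product, $\mathrm{Hess}_{g_0}\sigma(E_1,E_j)=0$ and $E_1(\sigma)=0$, so $\mathrm{Ric}(g)(E_1,E_j)=0$ for $j=2,3$. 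The whole statement therefore reduces to the single identity
\begin{equation*}
\mathrm{Hess}_{I_u}\sigma(E_2,E_3)=E_2(\sigma)\,E_3(\sigma),
\end{equation*}
equivalently $\mathrm{Hess}_{I_u}(e^{-\sigma})(E_2,E_3)=0$, i.e. the Hessian of $e^{-\sigma}$ with respect to $I_u$ should be diagonal in the principal frame of $u$. Here I would use the Gauss equation of $u$ in its ambient space form to write $e^{2\sigma}=(k_1-k_2)^2+k_1k_2$ in terms of the principal curvatures $k_1,k_2$ of $u$, observe that $k_1k_2$ is constant (again because $K_u$ is), and then pass to curvature-line coordinates $(x,y)$ of $u$ (which exist since $f$ being generic forces $k_1\neq k_2$), in which the Codazzi equations read $\partial_yk_1=\tfrac{E_y}{2E}(k_2-k_1)$ and $\partial_xk_2=\tfrac{G_x}{2G}(k_1-k_2)$; combined with $\partial_x(k_1k_2)=\partial_y(k_1k_2)=0$, a direct computation makes the mixed second covariant derivative of $e^{-\sigma}$ vanish.

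Everything before this last identity is formal, so the real work---and the main obstacle---is precisely the Codazzi computation verifying $\mathrm{Hess}_{I_u}(e^{-\sigma})(E_2,E_3)=0$: it is automatic when $u$ is rotationally symmetric (then $e^{-\sigma}$ is a function of a single coordinate), but for a general constant-curvature surface one has to combine the Codazzi equations with careful bookkeeping of the Christoffel symbols of $I_u$. A more hands-on alternative that bypasses (\ref{equa5}) is to write the M\"obius frame $\{Y,N,Y_i,\xi\}$ explicitly for each example---for instance $Y=\sqrt{4H_u^2-3(K_u-1)}\,(\cosh\tau,-\sinh\tau,u)$ with $\tau=\log t$ for the cone over $u\subset\mathbb{S}^3$---and read ${\bf C}$ off the $\xi$-component of $dN$; this route ends at the same constant-curvature computation.
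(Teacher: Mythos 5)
Your reduction is correct and is in substance the paper's own argument: by (\ref{cb}) and (\ref{equa5}) with $n=3$ the claim is equivalent to the Blaschke tensor, equivalently the Ricci tensor of $g=\rho^2(ds^2+I_u)$, being diagonal in the M\"obius principal frame, and the components involving the fibre direction $E_1$ vanish because $\rho$ lives on the $M^2$-factor of the product. (The paper reaches the same point by quoting Yau's conformal-change formulas and reading $A_{jk}=R_{ijik}$ off (\ref{equa4}).) So everything hinges on the single identity you isolate at the end, $\mathrm{Hess}_{I_u}(e^{-\sigma})(E_2,E_3)=0$, i.e.\ $\mathrm{Hess}_{I_u}(\rho^{-1})$ diagonal in the principal frame of $u$. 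This you do not prove: you announce that ``a direct computation'' with the Codazzi equations will give it, and then concede it is ``the main obstacle''. That is a genuine gap, not a routine verification, and it is exactly the point the paper's own proof also passes over (the assertion that Yau's formulas ``imply'' $(A_{ij})$ is diagonal requires precisely $\rho_{23}=0$).

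Worse, the deferred computation does not appear to close. Take a cylinder over a pseudospherical surface $u\subset\mathbb{R}^3$ ($k_1k_2=-1$, so the cylinder is generic and, by Proposition \ref{prop31}, conformally flat). Codazzi lets you normalize curvature-line coordinates so that $I_u=\cos^2\phi\,dx^2+\sin^2\phi\,dy^2$, $k_1=\tan\phi$, $k_2=-\cot\phi$; then $\rho^{-1}=F(\phi)$ with $F(\phi)=(\tan^2\phi+\cot^2\phi+1)^{-1/2}$, and a direct evaluation of the mixed covariant derivative gives
\begin{equation*}
\mathrm{Hess}_{I_u}(\rho^{-1})(\partial_x,\partial_y)=F'(\phi)\left[\phi_{xy}+(\tan\phi-\cot\phi)\,\phi_x\phi_y\right]+F''(\phi)\,\phi_x\phi_y .
\end{equation*}
At this stage Codazzi has been used up in the normalization, and the only remaining constraint on $\phi$ is the Gauss equation $\phi_{xx}-\phi_{yy}=\sin\phi\cos\phi$, which does not involve $\phi_{xy}$. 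Since $F'(\phi)\neq0$ away from $\phi=\pi/4$, a solution with $\phi_x=\phi_y=0$ and $\phi_{xy}\neq0$ at a point (such 2-jets are realized by sine-Gordon solutions, e.g.\ non-rotational multi-soliton surfaces) makes the mixed Hessian nonzero there; the identity holds only for special profiles, e.g.\ when $\phi$ depends on a single curvature-line coordinate, which is the rotationally symmetric case you correctly flag as ``automatic''. The same obstruction shows up if one instead computes $C$ from the Euclidean data: one finds $C=p(\phi)\phi_x\,dx+q(\phi)\phi_y\,dy$ with $p-q$ proportional to $(k_1-k_2)\,d(\log\rho)/d\phi\not\equiv0$, so $dC$ contains the term $(q-p)\phi_{xy}\,dx\wedge dy$. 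So the step you leave open is not merely laborious bookkeeping: as stated it fails for a general constant-curvature profile surface, and either an additional hypothesis is being used implicitly or the statement itself needs to be revisited. In any case your argument, as written, does not establish the proposition.
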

\begin{proof}
The M\"{o}bius metric $g$ in above three examples (\ref{ex31}) (\ref{ex32}) (\ref{ex33}) can be unified in a single formula:
$$g=\left[4H_u^2-3(K_u-\epsilon)\right](ds^2+I_u)=\rho^2(ds^2+I_u),$$
where $I_u$, $K_u$, and $H_u$ are the induced metric, Gaussian curvature, and mean curvature of $u:M^2\rightarrow N^3(\epsilon)$, respectively.

Let $\{e_2,e_3\}$ be a local orthonormal basis on $TM^2$ with respect to $I_u$, consisting of unit principal vectors of $u$ and $e_1=\frac{\partial}{\partial s}$.
Then $\{e_1,e_2,e_3\}$ is an orthonormal basis for $T(I\times M^2)$ with respect to $ds^2+I_u$. Let $\tilde{R}_{ijkl}$ denote the curvature tensor of the metric $ds^2+I_u$, and  $R_{ijkl}$  the curvature tensor for $g=\rho^2[ds^2+I_u]$. From Yau's paper \cite{yau}, we have
\begin{equation}\label{rrij}
\begin{split}
&R_{ijij}=\rho^2\tilde{R}_{ijij}+\rho\rho_{ii}+\rho\rho_{jj}-|\nabla\rho|^2, i\neq j\\
&R_{ijik}=\rho^2\tilde{R}_{ijik}+\rho\rho_{jk},~~ when~ \{i,j,k\}~ are~ distinct,\\
\end{split}
\end{equation}
which implies that $(B_{ij})=diag(b_1,b_2,b_3)$ and $(A_{ij})=diag(a_1,a_2,a_3)$ under
the local orthonormal basis $\{\rho^{-1}e_1,\rho^{-1}e_2,\rho^{-1}e_3\}$ by the equation (\ref{equa4}).
Thus
$dC=0$ by the equation (\ref{cb}).
\end{proof}
Next, we prove Theorem \ref{the1}. From Proposition \ref{prop}, we prove another hand of Theorem \ref{the1} and we assume $dC=0$.
From the equation (\ref{cb}),
under the orthonormal basis $\{E_{1},E_2,E_{3}\}$ in (\ref{second}) we find
\begin{equation}\label{second1}
(A_{ij})=diag\{a_1, a_2, a_3\}.
\end{equation}
The equations (\ref{second}) and (\ref{second1}) imply that
$$R_{ijik}=A_{jk}=0,~~j\neq k,$$
by the equation (\ref{equa4}).

From the definition of $B_{ij,kl}$
and (\ref{equa3}), (\ref{flat2}) and (\ref{ww}), we have
\begin{equation}\label{bb1b}
\begin{split}
&B_{23,31}=\frac{3b_2B_{33,1}-3b_3B_{22,1}}{(b_3-b_2)^2}C_2+\frac{3b_3}{b_2-b_3}[C_{2,1}-\frac{B_{12,1}}{b_1-b_2}C_1]+B_{31,3}\frac{B_{12,1}}{b_1-b_2},\\
&B_{23,13}=(B_{22,1}-B_{33,1})\frac{B_{23,3}}{b_2-b_3}+(B_{11,2}-B_{33,2})\frac{B_{13,3}}{b_1-b_3},
\end{split}
\end{equation}
Using Ricci identity $B_{23,31}-B_{23,13}=(b_3-b_2)R_{2313}=0$ and (\ref{bb1b}),  we have
\begin{equation*}
b_3C_{1,2}=\frac{2b_1^2+b_2b_3}{(b_2-b_1)(b_1-b_3)}C_1C_2=-C_1C_2.
\end{equation*}
Similarly we have
\begin{equation*}
b_1C_{2,3}=-C_2C_3,~~ b_2C_{1,3}=-C_1C_3.
\end{equation*}
Therefore
\begin{equation}\label{bc}
b_kC_{i,j}=-C_iC_j,i\neq j,i\neq k,k\neq j.
\end{equation}
Now we define $\{C_{i,jk}\}$ given by
$$dC_{i,j}+\sum_mC_{m,j}\omega_{mi}+\sum_mC_{i,m}\omega_{mj}=\sum_mC_{i,jm}\omega_m.$$
Let $\{i,j,k\}$ be distinct. Taking derivative for (\ref{bc}) along $E_k$ and invoking (\ref{flat2}) and (\ref{ww}), we get
\begin{equation}\label{bc0}
\begin{split}
&B_{kk,k}C_{i,j}+b_k[C_{i,jk}-C_{k,j}\frac{B_{ki,k}}{b_k-b_i}-C_{i,k}\frac{B_{kj,k}}{b_k-b_j}]\\
&=-C_i[C_{j,k}-C_k\frac{B_{jk,k}}{b_k-b_j}]-C_j[C_{i,k}-C_k\frac{B_{ik,k}}{b_k-b_i}].
\end{split}
\end{equation}
If $b_1b_2b_3=0$, we can assume that $b_1=0$, which implies that $b_2=-b_3=\sqrt{\frac{1}{3}}$ by the equation (\ref{equa6}). Using
(\ref{flat2}), we have $C_1=C_2=C_3=0$ and $B_{ij,k}=0$.

Next we assume that $b_1b_2b_3\neq0$. Because
$b_1^2+b_2^2+b_3^2=\frac{2}{3},B_{ij,j}=B_{jj,i}-C_i$, from
(\ref{bc}) and (\ref{bc0}) we have
\begin{equation}\label{bcc}
b_kC_{i,jk}=-\frac{4}{3}\frac{C_iC_jC_k}{b_ib_jb_k}=-\frac{4}{3}\frac{C_1C_2C_3}{b_1b_2b_3}.
\end{equation}
Since $C_{i,jk}=C_{j,ik}=C_{k,ij}$ and $b_i\neq b_j,i\neq j$, from
(\ref{bcc}) we get
$$C_{i,jk}=C_{j,ik}=C_{k,ij}=0,~~C_1C_2C_3=0.$$
We can assume that $C_1=0$, then
\begin{equation}\label{ww1}
\omega_{12}=\frac{B_{12,1}}{b_1-b_2}\omega_1,~\omega_{13}=\frac{B_{13,1}}{b_1-b_3}\omega_1,
~\omega_{23}=\frac{B_{23,2}}{b_2-b_3}\omega_2+\frac{B_{23,3}}{b_2-b_3}\omega_3.
\end{equation}
From (\ref{ww1}),
combining $d\omega_{12}-\omega_{13}\wedge\omega_{32}=\frac{-1}{2}\sum_{kl}R_{12kl}\omega_k\wedge\omega_l,$  we obtain
\begin{equation*}
\begin{split}
&\frac{-1}{2}\sum_{kl}R_{12kl}\omega_k\wedge\omega_l=d(\frac{B_{12,1}}{b_1-b_2})\wedge\omega_1\\
&+[(\frac{B_{12,1}}{b_1-b_2})^2+\frac{B_{13,1}B_{23,2}}{(b_1-b_3)(b_2-b_3)}]\omega_1\wedge\omega_2+
\frac{B_{13,1}}{b_1-b_3}[\frac{B_{12,1}}{b_1-b_2}+\frac{B_{23,3}}{b_2-b_3}]\omega_1\wedge\omega_3,
\end{split}
\end{equation*}
which implies that
\begin{equation}\label{dc1}
\begin{split}
&E_3(\frac{B_{12,1}}{b_1-b_2})-\frac{B_{13,1}}{b_1-b_3}[\frac{B_{12,1}}{b_1-b_2}+\frac{B_{23,3}}{b_2-b_3}]=0,\\
&E_2(\frac{B_{12,1}}{b_1-b_2})-[(\frac{B_{12,1}}{b_1-b_2})^2+\frac{B_{13,1}B_{23,2}}{(b_1-b_3)(b_2-b_3)}]=R_{1212}=b_1b_2+a_1+a_2.
\end{split}
\end{equation}
Similarly we have
\begin{equation}\label{dc2}
\begin{split}
&E_2(\frac{B_{13,1}}{b_1-b_3})-\frac{B_{12,1}}{b_1-b_2}[\frac{B_{13,1}}{b_1-b_3}-\frac{B_{23,2}}{b_2-b_3}]=0,\\
&E_3(\frac{B_{13,1}}{b_1-b_3})-[(\frac{B_{13,1}}{b_1-b_3})^2-\frac{B_{12,1}B_{23,3}}{(b_1-b_2)(b_2-b_3)}]=R_{1313}=b_1b_3+a_1+a_3.
\end{split}
\end{equation}

Under the local basis above, $\{Y,N,Y_1,Y_2,Y_3,\xi\}$ forms a moving
frame in $\mathbb{R}^6_1$ along $M^3$. We define
\begin{equation}
\begin{split}
&F=b_1Y+\xi,\;\; X_2=\frac{B_{12,1}}{b_1-b_2}Y+Y_2,\;\;X_3=\frac{B_{13,1}}{b_1-b_3}Y+Y_3,\\
&T=a_1Y+N-\frac{B_{12,1}}{b_1-b_2}Y_2-\frac{B_{13,1}}{b_1-b_3}Y_3-b_1\xi,\\
&Q=2a_1+b_1^2+(\frac{B_{12,1}}{b_1-b_2})^2+(\frac{B_{13,1}}{b_1-b_3})^2.
\end{split}
\end{equation}
Clearly $F$ is the curvature sphere of principal curvature $k_1$. And
\begin{equation*}
\begin{split}
&<F,X_2>=<F,X_3>=<F,T>=<F,Y_1>=0,\\
&<T,X_2>=<T,X_3>=<T,Y_1>=<X_2,X_3>=0,\\
&<F,F>=<X_2,X_2>=<X_3,X_3>=1,~ <T,T>=Q.
\end{split}
\end{equation*}
From structure equation of the hypersurface and (\ref{ww1}), we get
\begin{equation}
E_1(F)=0,~~E_2(F)=(b_1-b_2)X_2, ~~ E_3(F)=(b_1-b_3)X_3.
\end{equation}
Thus curvature sphere $F$ induces a surface $\tilde{M}=M^3/L$ in the de-Sitter space $\mathbb{S}^5_1$
$$F:\tilde{M}=M^3/L\longrightarrow \mathbb{S}^5_1,$$
where fibers $L$ are integral submanifolds of distribution $D=span\{E_1\}$.
We define
$V=span\{T,Y_1\}.$ Clearly we have
$$F\bot V.$$
Using (\ref{ww1}), (\ref{dc1}) and (\ref{dc2}), we can get that
\begin{equation}\label{eqq}
\begin{split}
&E_1(Y_1)=-T,~~E_2(Y_1)=0,~~E_3(Y_1)=0,\\
&E_1(T)=QY_1,~~E_2(T)=\frac{B_{12,1}}{b_1-b_2}T,~~E_3(T)=\frac{B_{13,1}}{b_1-b_3}T.
\end{split}
\end{equation}
This implies that subspace $V$ is parallel along $M^3$.
Similarly we have
\begin{equation}\label{eq1}
E_1(Q)=0, E_2(Q)=2\frac{B_{12,1}}{b_1-b_2}Q, E_3(Q)=2\frac{B_{13,1}}{b_1-b_3}Q.
\end{equation}
Regarding (\ref{eq1}) as a linear first-order differential equation for
$Q$, we see that $Q\equiv 0$ or
$Q\neq 0$ on an connected manifold $M^n$. Therefore there are three possibilities for the induced metric on the fixed subspace $V\subset \mathbb{R}^6_1$.

{\bf Case 1}, $Q=0$, then $<T,T>=Q=0$, therefore $V$ is endowed with a degenerate inner product.

By (\ref{eqq}), $T$ determines a fixed light-like direction in $\mathbb{R}^6_1$. Up to a M\"{o}bius transformation, we may take to be
$$T=\lambda(1,-1,0,0,0,0), ~~\lambda\in C^{\infty}(M^3).$$
Since $V$ is a fixed degenerate subspace in $\mathbb{R}^6_1$, we can find a space-like vector
$v$ such that $V=Span\{e=(1,-1,0,0,0,0), v\}$ and $<e,F>=<v,F>=0$. We interpret the geometry of the hypersurface $f:M^3\rightarrow \mathbb{R}^4$
as below:

1) $v$ determines a fixed hyperplane $\Sigma$ in $\mathbb{R}^4$ because of $<T,v>=0$.

2) $F$ is a two parameter family of hyperplanes orthogonal to the fixed hyperplane $\Sigma$ in $\mathbb{R}^4$.

Therefore $f(M^3)$, as the envelope of this family of hyperplanes $F$, is clearly a cylinder over a surface $\tilde{M}\subset \mathbb{R}^3$.

{\bf Case 2}, $Q<0$, then $<T,T>$ is negative, and  $V$ is a Lorentz subspace in $\mathbb{R}^6_1$. Up to a M\"{o}bius transformation, we can assume that
\begin{equation*}
V=Span\{T,Y_1\}
=Span\{p_0=(1,1,0,0,0,0),p_1=(1,-1,0,0,0,0)\}.
\end{equation*}

Using the stereographic projection, $p_0, p_1$ correspond to the origin $O$ and the point at infinity $\infty$ of $\mathbb{R}^4$, respectively. Since $F\perp V$,
$F$ is a two parameter family of hyperplanes (passing $O$ and $\infty$). Therefore $f(M^3)$, as the envelope of this family of hyperplanes $F$, is clearly a cone (with
vertex $O$) over a surface $\tilde{M}\subset \mathbb{S}^3$.

{\bf Case 3}, $Q>0$, then $<T,T>$ is positive, and  $V$ is a space-like subspace in $\mathbb{R}^6_1$. Up to a M\"{o}bius transformation, we can assume that
\begin{equation*}
V=Span\{T,Y_1\}
=Span\{(0,0,1,0,0,0),(0,0,0,1,0,0)\}=\mathbb{R}^2.
\end{equation*}
Thus $V$ is a fixed two dimensional plane $\mathbb{R}^2\subset \mathbb{R}^4$, and $F$ is a two parameter family of hyper-sphere orthogonal to this fixed plane $\mathbb{R}^2$ with centers locating
on it. Thus $F$ envelopes a rotational hypersurface $f(M^3)$ (over a surface $\tilde{M}\subset \mathbb{R}^3_+$).

From Case 1, Case 2, Case 3, we prove that if $dC=0$, then the hypersurface is  M\"{o}bius equivalent to one of the standard examples of generic
conformally flat hypersurface. thus
we complete the proof of Theorem \ref{the1}.

\vskip 1 cm
\section{Global behavior of the generic conformally flat hypersurface}
Let $f: M^3\to \mathbb{R}^4$ be a generic conformally flat hypersurface. We say that the pair $(U,\omega)$ is admissible if\\
(1), $U$ is an open subset of $M^3$,\\
(2), $\omega=(\omega_1,\omega_2,\omega_3)$ is a orthonormal co-frame field on $U$ with respect to the M\"{o}bius metric $g$,\\
(3), $\omega_1\wedge\omega_2\wedge\omega_3=dv_g$,\\
(4), $B=\sum_ib_i\omega_i\otimes\omega_i$.

Denote by $F=(E_1,E_2,E_3)$ the dual frame field of $\omega$. Then it is easily-seen
that, $(U,\omega)$ is admissible if and only if $E_i$ is an unit principal vector associated to
$b_i$ for each $1\leq i\leq 3$, and $\{E_1,E_2,E_3\}$ is an oriented basis associated to the
orientation of $M^3$. Denote by $\{\omega_{ij}\}$ the connection form with respect
to $(U,\omega_1,\omega_2,\omega_3)$.
Thus under the admissible frame field $\{E_1,E_2,E_3\}$,
\begin{equation*}
(B_{ij})=diag\{b_1, b_2, b_3\}.
\end{equation*}
Now we introduce two $2$-forms on $M^3$: for every admissible co-frame field $(U,\omega)$,
set
\begin{equation*}
\begin{split}
&\Phi=\omega_{12}\wedge\omega_3+\omega_{23}\wedge\omega_1+\omega_{31}\wedge\omega_2,\\
&\Psi=(b_1-b_2)^2\omega_{12}\wedge\omega_3+(b_2-b_3)^2\omega_{23}\wedge\omega_1+(b_1-b_3)^2\omega_{31}\wedge\omega_2.
\end{split}
\end{equation*}
If $(U,\omega)$ and $(\tilde{U},\tilde{\omega})$ are both admissible co-frame
fields with $U\cap \tilde{U}\neq\emptyset$. Then on $U\cap \tilde{U}$, $\omega_i=\epsilon_i\tilde{\omega}_i, ~~\omega_{ij}=\epsilon_i\epsilon_j\tilde{\omega}_{ij}$ for every
$1\leq i\leq 3$, where $\epsilon_i=1$ or $-1$ and $\epsilon_1\epsilon_2\epsilon_3=1$. Thus
$$\omega_{12}\wedge\omega_3=\tilde{\omega}_{12}\wedge\tilde{\omega}_3,~~
\omega_{23}\wedge\omega_1=\tilde{\omega}_{23}\wedge\tilde{\omega}_1,~~
\omega_{31}\wedge\omega_2=\tilde{\omega}_{31}\wedge\tilde{\omega}_2.$$
Therefore the $2$-forms $\Phi, \Psi$ are well-defined on $M^3$.
Combining $d\omega_{ij}-\sum_k\omega_{ik}\wedge\omega_{kj}=\frac{-1}{2}\sum_{kl}R_{ijkl}\omega_k\wedge\omega_l$, $d\omega_i=\sum_k\omega_{ik}\wedge\omega_k$
and the equation (\ref{flat2}), (\ref{ww}), we get
\begin{equation*}
\begin{split}
&d(\omega_{12}\wedge\omega_3)=-R_{1212}\omega_1\wedge\omega_2\wedge\omega_3\\
&+[\frac{-9b_1b_2C_3^2}{(b_1-b_3)^2(b_2-b_3)^2}+\frac{9b_2b_3C_1^2}{(b_1-b_2)^2(b_1-b_3)^2}
+\frac{9b_1b_3C_2^2}{(b_1-b_2)^2(b_2-b_3)^2}]\omega_1\wedge\omega_2\wedge\omega_3.
\end{split}
\end{equation*}
Similarly we can compute $d(\omega_{23}\wedge\omega_1)$ and $d(\omega_{31}\wedge\omega_2)$. Therefore we have
\begin{equation}\label{deq}
\begin{split}
&d\Phi=[\frac{9b_1b_2C_3^2}{(b_1-b_3)^2(b_2-b_3)^2}-R_{1212}-R_{1313}-R_{2323}]\omega_1\wedge\omega_2\wedge\omega_3\\
&+[\frac{9b_2b_3C_1^2}{(b_1-b_2)^2(b_1-b_3)^2}
+\frac{9b_1b_3C_2^2}{(b_1-b_2)^2(b_2-b_3)^2}]\omega_1\wedge\omega_2\wedge\omega_3.
\end{split}
\end{equation}
Using $db_i=\sum_kB_{ii,k}\omega_k$ and the same computation as $d\Phi$, we can obtain
\begin{equation}\label{deq1}
\begin{split}
&d\Psi=-[(b_1-b_2)^2R_{1212}+(b_1-b_3)^2R_{1313}+(b_2-b_3)^2R_{2323}]dv_g\\
&+[\frac{18b_1b_2C_3^2}{(b_1-b_3)^2(b_2-b_3)^2}+\frac{18b_2b_3C_1^2}{(b_1-b_2)^2(b_1-b_3)^2}
+\frac{18b_1b_3C_2^2}{(b_1-b_2)^2(b_2-b_3)^2}]dv_g,
\end{split}
\end{equation}
where $dv_g=\omega_1\wedge\omega_2\wedge\omega_3$. Combining the equation (\ref{deq}) and the equation (\ref{deq1}), we have
\begin{equation}\label{deq2}
2d\Phi-d\Psi=\{[(b_1-b_2)^2-2]R_{1212}+[(b_2-b_3)^2-2]R_{2323}+[(b_1-b_3)^2-2]R_{1313}\}dv_g.
\end{equation}
If $M^3$ is compact,  the equation (\ref{deq2}) implies that
\begin{equation}\label{deq3}
\int_{M^3}\{[(b_1-b_2)^2-2]R_{1212}+[(b_2-b_3)^2-2]R_{2323}+[(b_1-b_3)^2-2]R_{1313}\}dv_g=0.
\end{equation}

From $b_1+b_2+b_3=0$ and $b_1^2+b_2^2+b_3^2=\frac{2}{3}$, we can derive that
\begin{equation*}
(b_1-b_2)^2+(b_2-b_3)^2+(b_1-b_3)^2=2.
\end{equation*}
which implies that
\begin{equation}\label{bb2}
(b_1-b_2)^2-2<0,~~(b_2-b_3)^2-2<0,~~(b_1-b_3)^2-2<0.
\end{equation}

Now we assume that the sectional curvature of $M^3$ with respect to the M\"{o}bius metric have sign, for example, the sectional
curvature is nonnegative. The equation (\ref{deq3}) and (\ref{bb2}) imply that the sectional curvature vanishes, i.e.,
$$R_{1212}=R_{2323}=R_{1313}=0.$$
In \cite{lmw}, authors classify the hypersurfaces $f: M^3\to \mathbb{R}^{4}$ with constant M\"{o}bius sectional curvature, which are
non-compact. This is a contradiction, thus we finish the proof of Theorem \ref{the2}.

Using the equation (\ref{equa4}) and the equation (\ref{equa6}), we have
\begin{equation}\label{deq4}
\begin{split}
&[(b_1-b_2)^2-2]R_{1212}+[(b_2-b_3)^2-2]R_{2323}+[(b_1-b_3)^2-2]R_{1313}\\
&=\frac{2}{9}-\frac{10}{3}tr(A)+3[b_1^2a_1+b_2^2a_2+b_3^2a_3].
\end{split}
\end{equation}
On the other hand,  the equation (\ref{equa5}) implies that
\begin{equation}\label{bb3}
|Ric|^2=\frac{2}{9}+5tr(A)^2+|A|^2-\frac{4}{3}tr(A)-2[b_1^2a_1+b_2^2a_2+b_3^2a_3],
\end{equation}
where $|Ric|$ denote the norm of the Ricci curvature.
Combining the equation (\ref{deq4}) and the equation (\ref{bb3}), we can derive that
\begin{equation}\label{deq5}
\begin{split}
&[(b_1-b_2)^2-2]R_{1212}+[(b_2-b_3)^2-2]R_{2323}+[(b_1-b_3)^2-2]R_{1313}\\
&=\frac{5}{9}-\frac{16}{3}tr(A)+\frac{15}{2}tr(A)^2+\frac{3}{2}|A|^2-\frac{3}{2}|Ric|^2.
\end{split}
\end{equation}
Let $\tilde{A}:=A-\frac{1}{3}tr(A)g$ denote the trace-free Blaschke tensor, then $|\tilde{A}|^2=|A|^2-\frac{1}{3}tr(A)^2$.
Thus from the equation (\ref{deq5}), we have
\begin{equation}\label{deq6}
\begin{split}
&[(b_1-b_2)^2-2]R_{1212}+[(b_2-b_3)^2-2]R_{2323}+[(b_1-b_3)^2-2]R_{1313}\\
&=\frac{3}{2}|\tilde{A}|^2-\frac{3}{2}[|Ric|^2-\frac{1}{3}R^2]-\frac{1}{9}.
\end{split}
\end{equation}
Now if the hypersurface $M^3$ is compact, then
\begin{equation}\label{inte}
\int_{M^3}\{|\tilde{A}|^2+\frac{1}{3}R^2-|Ric|^2-\frac{2}{27}\}dv_g=0.
\end{equation}
Therefore we finish the proof of Theorem \ref{the3}.

Since $R=tr(Ric)$, we have $\frac{1}{3}R^2-|Ric|^2\leq 0$ on $M^3$. if $\frac{1}{3}R^2-|Ric|^2\equiv0$, then the sectional curvature
$K=0$, and there is a contradiction by the results in \cite{lmw}. Thus Corollary \ref{cor} is proved.

\end{document}